\documentclass[12pt]{article}
\usepackage[
  a4paper,
  top=2.6cm,
  bottom=2.8cm,
  left=2.8cm,
  right=2.8cm,
  headheight=14pt,
  headsep=0.7cm,
  footskip=1.2cm,
  heightrounded
]{geometry}

\usepackage{amsmath,amsthm}

\usepackage[T1]{fontenc}
\usepackage{newtxtext}
\usepackage{newtxmath}

\usepackage{microtype}

\usepackage{setspace}
\usepackage{float}

\usepackage{tikz}
\usetikzlibrary{backgrounds}
\usetikzlibrary{arrows}
\usetikzlibrary{shapes,shapes.geometric,shapes.misc}
\pgfdeclarelayer{edgelayer}
\pgfdeclarelayer{nodelayer}
\pgfsetlayers{background,edgelayer,nodelayer,main}

\usetikzlibrary{bbox}

\usepackage{mathtools}
\usepackage{enumitem}
\usepackage[hidelinks]{hyperref}
\usepackage{microtype}
\usepackage{xcolor}
\usepackage[abbrev,msc-links,nobysame]{amsrefs}  

\usepackage{doi}

\hypersetup{
    colorlinks=true,
    linkcolor=blue,
    citecolor=blue,
    urlcolor=blue
}

\tikzstyle{none}=[inner sep=0mm]
\tikzstyle{bluenode}=[fill=white, draw=black, shape=circle, minimum size=0.2cm, inner sep=1pt, scale=0.75]
\tikzstyle{blacknode}=[fill=black, draw=black, shape=circle, minimum size=0.2cm, inner sep=1pt]
\tikzstyle{pinknode}=[fill={rgb,255: red,255; green,191; blue,191}, draw=black, shape=circle, minimum size=0.2cm, inner sep=1pt, scale=0.65]
\tikzstyle{rednode}=[fill={rgb,255: red,244; green,0; blue,0}, draw=black, shape=circle, minimum size=0.2cm, inner sep=1pt, scale=0.75]
\tikzstyle{whitenode}=[fill={rgb,255: red,245; green,245; blue,245}, draw=black, shape=circle, minimum size=0.2cm, inner sep=1pt, scale=0.65]
\tikzstyle{springnode}=[fill={rgb,255: red,44; green,218; blue,154}, draw=black, shape=circle, minimum size=0.2cm, inner sep=1pt, scale=0.65]
\tikzstyle{orangenode}=[fill={rgb,255: red,255; green,128; blue,0}, draw=black, shape=circle, minimum size=0.15cm, inner sep=0pt]
\tikzstyle{whilenode2}=[fill=white, draw=white, shape=circle]
\tikzstyle{testnode}=[fill={rgb,255: red,255; green,191; blue,191}, draw=black, shape=circle, minimum size=2cm, inner sep=0.3pt, line width=0.45mm]
\tikzstyle{testnode2}=[fill=none, draw=black, shape=circle, minimum size=1cm, inner sep=0.1pt, line width=0.45mm]
\tikzstyle{testnode3}=[fill=none, draw=black, shape=circle, minimum size=4cm, inner sep=0.1pt, line width=0.45mm]
\tikzstyle{square}=[draw=black, shape=rectangle, minimum size=0.3cm, inner sep=1pt, fill=white]
\tikzstyle{dot}=[fill=black, draw=black, shape=circle, minimum size=0.04cm, inner sep=0pt]
\tikzstyle{yellownode}=[fill=yellow, draw=black, shape=circle, minimum size=0.2cm, inner sep=0pt, scale=0.75]
\tikzstyle{redsquare}=[fill=white, draw=black, shape=rectangle, draw=red]
\tikzstyle{bluesquare}=[fill=white, draw=black, shape=rectangle, draw=blue]

\tikzstyle{blackedge}=[-, draw=black, fill=none, line width=0.2mm]

\tikzstyle{balck_dash}=[-, dash pattern=on 0.2mm off 0.2mm]
\tikzstyle{black_thick}=[-, draw=black, line width=0.45mm, fill=none]
\tikzstyle{greenedge}=[-, draw={rgb,255: red,9; green,122; blue,43}]
\tikzstyle{pureedge}=[-, draw={rgb,255: red,203; green,0; blue,203}, line width=0.5mm]

\tikzstyle{rededge}=[-, draw=red, line width=0.3mm]
\tikzstyle{red_dash}=[-, dash pattern=on 0.2mm off 0.2mm, draw=red]
\tikzstyle{rededge_thick}=[-, line width=0.2mm, draw=red]
\tikzstyle{dashpure1}=[-, dashed, draw={rgb,255: red,128; green,0; blue,128}]
\tikzstyle{dashpure_thick}=[-, dashed, line width=0.5mm, draw={rgb,255: red,128; green,0; blue,128}]
\tikzstyle{blueedge}=[-, draw={rgb,255: red,18; green,94; blue,235}, line width=0.15mm]
\tikzstyle{blue_dash}=[-, draw={rgb,255: red,13; green,20; blue,109}, line width=0.15mm, dash pattern=on 0.2mm off 0.2mm]
\tikzstyle{dashedge}=[-, dash pattern=on 0.2mm off 0.2mm, draw={rgb,255: red,143; green,0; blue,50}]
\tikzstyle{blackedge2}=[-, draw=black]
\tikzstyle{black_directedge}=[draw=black, ->]
\tikzstyle{orangeedge}=[-, draw={rgb,255: red,255; green,128; blue,0}]
\tikzstyle{blueedge_thick}=[-, draw=blue, line width=0.5mm]
\tikzstyle{red_dash}=[-, draw=red, dash pattern=on 0.2mm off 0.2mm]
\tikzstyle{shadow_Lightgrey}=[-, fill={rgb,255: red,224; green,224; blue,224}, draw=none]
\tikzstyle{shadow_Lightgreen}=[-, draw=blue, fill={rgb,255: red,216; green,255; blue,242}, dash pattern=on 0.2mm off 0.2mm]
\tikzstyle{shadow}=[-, fill={rgb,255: red,130; green,202; blue,255}, draw=red]
\tikzstyle{shadow_blue}=[-, fill={rgb,255: red,130; green,202; blue,255}, draw=black]
\tikzstyle{shadowblack}=[-, fill={rgb,255: red,240; green,248; blue,255}, draw=black]
\tikzstyle{shadow_pure}=[-, fill={rgb,255: red,224; green,182; blue,250}, draw=none]
\tikzstyle{shadow_}=[-, fill={rgb,255: red,255; green,246; blue,112}, draw=none]
\tikzstyle{shadow_silver}=[-, draw=black, fill={rgb,255: red,186; green,186; blue,186}]
\tikzstyle{pickshing}=[-, draw={rgb,255: red,128; green,0; blue,128}, fill={rgb,255: red,255; green,191; blue,191}]
\tikzstyle{shadow_Lightgrey2}=[-, fill={rgb,255: red,193; green,193; blue,193}, draw=black]
\tikzstyle{shadow_silver2}=[-, draw=black, fill={rgb,255: red,186; green,186; blue,186}, line width=0.45mm]
\tikzstyle{yellowedge}=[-, draw={rgb,255: red,226; green,187; blue,47}, line width=0.5mm]
\tikzstyle{yellowedge2}=[-, draw={rgb,255: red,255; green,128; blue,0}, line width=0.5mm, postaction={decorate, decoration={markings, mark=at position 0.57 with {\arrow[blue]{Latex[length=2mm,width=2mm]}}}}]
\tikzstyle{fivearrows}=[-, draw=blue, line width=0.5mm, postaction={decorate, decoration={markings, mark=between positions 0.1 and 0.9 step 0.2 with {\arrow[blue]{Latex[length=2mm,width=2mm]}}}}, fill=cyan]
\tikzstyle{shadow_silver3}=[-, draw=black, fill={rgb,255: red,184; green,102; blue,255}]
\tikzstyle{shadow_silver4}=[-, draw=black, line width=0.2mm, pattern=vertical lines, pattern color=black]
\tikzstyle{shadow_thick}=[-, draw=black, fill={rgb,255: red,191; green,191; blue,191}, line width=0.35mm]
\tikzstyle{midrrows}=[-, draw=black, line width=0.2mm]
\tikzstyle{pinkedge}=[-, draw={rgb,255: red,255; green,191; blue,191}]

\newtheorem{theorem}{Theorem}[section]
\newtheorem{lemma}[theorem]{Lemma}
\newtheorem{corollary}[theorem]{Corollary}
\newtheorem{proposition}[theorem]{Proposition}

\newtheorem{claim}{Claim}
\newtheorem{observation}[theorem]{Observation}

\title{\mbox{On the Restricted Edge-Cuts of Optimal 1-Planar Graphs}}
\author{ \normalfont
  Licheng Zhang$^{1}$, Zhangdong Ouyang$^{2}$\thanks{Corresponding author}, Yuanqiu Huang$^{1}$, Guiping Wang$^{1}$\\[3pt]
\small $^1$School of Mathematics and Statistics, Hunan Normal University\\
\small $^2$School of Mathematics, Hunan First Normal University\\
\small Changsha, China\\
\small \texttt{lczmath@hunnu.edu.cn}, \texttt{oymath@163.com}, \texttt{hyqq@hunnu.edu.cn}, \texttt{wanggpmath@163.com}
}

\date{}

\begin{document}
\maketitle

\begin{abstract} The restricted edge-connectivity of a graph is the minimum size of
an edge-cut whose removal leaves every component with at least two
vertices.  In 2024, Zhang et al. showed that the restricted edge-connectivity of any optimal $1$-planar graph belongs to $\{8,10,12\}$. In this paper, we exclude $8$ as a possible value, thereby proving
that the restricted edge-connectivity is either $10$ or $12$, and both values are attainable. Furthermore, we show that the restricted edge-connectivity of a 6-connected optimal 1-planar graph equals $10$ if and only if the graph contains an edge whose two endvertices both have degree $6$. As a key ingredient, we characterize the structure of vertex-induced subgraphs on $n$ vertices with $4n-9$ edges in optimal 1-planar graphs, and use this characterization to establish a connection between restricted edge-cuts and vertex-cuts in optimal 1-planar graphs.

\end{abstract}
\noindent\textbf{Keywords:}
optimal $1$-planar graph; restricted edge-connectivity;
almost optimal $1$-planar graph


\section{Introduction}
All graphs considered in this paper are finite and simple.
A \emph{drawing} of a graph represents its vertices by distinct points in the
plane and its edges by arcs joining their corresponding endpoints. An
intersection point of the interiors of two edges is called a
\emph{crossing}. A graph is \emph{planar} if it admits a drawing without
crossings, and \emph{$1$-planar} if it admits a drawing in which each
edge is crossed at most once.  A \emph{plane graph} is a planar graph drawn in the plane without crossings.
Similarly, a \emph{$1$-plane graph} is a $1$-planar graph drawn in the plane
so that each edge is crossed at most once. 1-planar graphs were
introduced by Ringel in 1965 in connection with the simultaneous
colouring of vertices and faces of plane graphs \cite{MR0187232}.
 It is well known that a $1$-planar graph on $n\ge 3$ vertices has at most
$4n-8$ edges \cite{MR2746706}. A $1$-planar graph  is called \emph{optimal} if it has
exactly $4n-8$ edges. Optimal $1$-planar
graphs exist precisely for $n=8$ and for every $n\ge 10$, and none exist
for $n\le 7$ or $n=9$ \cite{MR2746706}. Various aspects of optimal $1$-planar graphs have been extensively
studied, including recognition~\cite{MR3741533}, matching
extendability~\cite{MR3846896,MR4606109}, Hamiltonicity~\cite{MR2993519},
edge decompositions~\cite{MR3591220}, and complete minors~\cite{MR3624608}.
 Moreover, to the best of our knowledge, several interesting problems concerning optimal 1-planar graphs remain unresolved. For example, Bekos et al. conjectured that every optimal 1-planar graph has book thickness four \cite{MR3487158}. For each $i\in\{3,4,5\}$, the
maximum possible number of copies of $K_i$ in an $n$-vertex optimal
$1$-planar graph remains unknown~\cite{MR4522420}.

In this paper, we study a classical variant of edge-cuts, namely restricted edge-cuts, in optimal 1-planar graphs. To state our main results, we introduce some terminology and explain the motivation.
Let $G$ be a connected graph with vertex set $V(G)$ and edge set $E(G)$. 
A \emph{vertex-cut} of $G$ is a set $S\subseteq V(G)$ such that $G-S$ is disconnected.
An \emph{edge-cut} of $G$ is a set $F\subseteq E(G)$ such that
$G-F$ is disconnected.
For a
non-complete graph $G$, the \emph{vertex-connectivity} $\kappa(G)$ and
the \emph{edge-connectivity} $\lambda(G)$ are the minimum sizes of a
vertex-cut and an edge-cut, respectively.  The vertex connectivity of optimal $1$-planar graphs has been studied as a tool for investigating the number of 1-planar embeddings \cite{MR2746706} and matching extendability \cite{MR3846896} of optimal $1$-planar graphs.
 Suzuki \cite{MR2746706} proved that every optimal 1-planar graph is 4-connected. This result was later strengthened by Fujisawa, Segawa, and Suzuki \cite{MR3846896}, who showed that the vertex-connectivity of any optimal 1-planar graph is either 4 or 6. It is known that optimal 1-plane graphs have no $\times$-crossings, where a $\times$-crossing is a crossing whose four endpoints induce only the two crossing edges. Thus, the linear-time algorithm of Biedl and Murali~\cite{MR4985368} for 1-plane graphs without $\times$-crossings can be used to determine the vertex-connectivity of an optimal 1-plane graph in linear time.

In 2024, Zhang, Huang, and Wang \cite{MR4665697} showed that every optimal
$1$-planar graph $G$ has edge-connectivity $6$. Moreover, they showed that every minimum
edge-cut of $G$ consists precisely of all edges incident with
some vertex of degree $6$; clearly, deleting any such edge-cut leaves an isolated 
vertex (the vertex is of degree $6$ in $G$).
Thus, it is interesting to consider what happens if we restrict the attention 
to edge-cuts that leave no isolated vertex, i.e., those that separate 
the graph into parts each having at least two vertices. 
The concept of restricted edge-cuts and edge-connectivity were  introduced by Esfahanian and Hakimi~\cite{MR0935244}.
The \emph{components} of a graph $G$ are the maximal connected subgraphs of $G$.  An edge-cut $F$ of $G$ is called a
\emph{restricted edge-cut} if  every component of $G-F$ has  at least two vertices. The
\emph{restricted edge-connectivity} of $G$, denoted by $\xi(G)$,
is the minimum cardinality of a restricted edge-cut of $G$. A restricted
edge-cut $F$ is called \emph{minimum} if $|F|=\xi(G)$.
By~\cite{MR0935244}, the restricted edge-connectivity exists for every
connected graph of order at least four that is not a star. Hence it
exists for every optimal $1$-planar graph.

Furthermore, the authors of \cite{MR4665697} established the following result on the possible values of the restricted edge-connectivity of optimal 1-planar graphs.
\begin{theorem}[\cite{MR4665697}]\label{lem:xiG}
Let G be an optimal 1-planar graph. Then 
$
    \xi(G)\in \{8,10,12\}.
$
\end{theorem}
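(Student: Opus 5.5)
The plan is to bound $\xi(G)$ from above and below, and then use a parity argument to restrict it to the even values $8,10,12$.

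\emph{Setup.} Let $F$ be a minimum restricted edge-cut, so $G-F$ has exactly two components $G[X]$ and $G[Y]$ with $V(G)=X\cup Y$ and $|X|,|Y|\ge 2$. Counting edges gives
\[
|F| = |E(G)| - |E(G[X])| - |E(G[Y])|.
\]
Induced subgraphs of a $1$-planar graph are $1$-planar, so $|E(G[X])|\le 4|X|-8$ whenever $|X|\ge 3$. For $|X|=2$ we have $|E(G[X])|=1$.

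\emph{Upper bound $\xi(G)\le 12$.} I would take an edge $uv$ whose endpoints have small degree. Optimal $1$-planar graphs have all degrees even and at least $6$, and the sum of degrees is $8n-16$, so there are at least eight vertices of degree $6$. In the standard structure (a quadrangulation $Q$ with both diagonals added in every face), there is an edge $uv$ with $\deg u+\deg v\le 14$. Such an edge can be found from the quadrangulation degree sum, which forces a $Q$-vertex of $Q$-degree $3$ adjacent to low-degree vertices. Cutting $\{u,v\}$ off then gives $|F|=\deg u+\deg v-2\le 12$. The complement $V\setminus\{u,v\}$ must be connected; this should follow from $4$-connectivity via Suzuki's result.

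\emph{Lower bound $\xi(G)\ge 8$.} If $|X|=2$, then $|F|\ge 6+6-2=10$. If $|X|\ge 3$ and $|Y|\ge 3$, the $1$-planar edge bound gives
\[
|F|\ge 4n-8-(4|X|-8)-(4|Y|-8)=8.
\]
The same bound holds if one side has size $3$ or $4$, by using the exact small-case bounds ($3$ and $6$ edges).

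\emph{Parity.} Since every degree in $G$ is even, $|F|=\sum_{x\in X}\deg x-2|E(G[X])|$ is even. This excludes $9$ and $11$.

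The main obstacle I expect is the upper bound: guaranteeing an edge $uv$ with $\deg u+\deg v\le 14$, rather than just a single vertex of degree $6$. I would attack this by discharging on the quadrangulation $Q$ (each $G$-degree equals twice the $Q$-degree), showing that some edge of $Q$, or some diagonal, joins vertices of $Q$-degrees $3$ and at most $4$. If that fails, the fallback is to cut off a small set such as a triangle around a degree-$6$ vertex and recount the edges directly.
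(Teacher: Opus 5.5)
The paper does not reprove this theorem; it cites it. Your outline is correct and has the same skeleton as the argument it attributes to the source \cite{MR4665697}:
\begin{itemize}
\item the lower bound $8$ from the $4n-8$ edge count, as in Section~\ref{sec:thm1};
\item evenness of $|F|$ from $\deg_G(v)=2\deg_Q(v)$;
\item the upper bound $12$ from a $(6,6)$- or $(6,8)$-edge $uv$. Its cut $[\{u,v\},V(G)\setminus\{u,v\}]$ is restricted because every other vertex keeps degree at least $4$.
\end{itemize}

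The one real difference is the light edge. The source imports it from Hud\'ak and \v Sugerek \cite{MR2974037}, whereas you propose to prove it by discharging on $Q$. That makes the argument self-contained, but as written the step is only asserted.

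A degree count over the edges of $Q$ alone cannot give it. If $Q$ is the vertex--face incidence graph of the icosahedron, every edge of $Q$ joins $Q$-degrees $3$ and $5$. The only $(6,6)$-edges of the corresponding optimal $1$-planar graph are crossing edges; they form a dodecahedron.

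The observation that makes your discharging close is this: two cyclically consecutive $Q$-neighbours of a vertex $w$ are opposite vertices of a face at $w$, and hence are adjacent in $G$. The argument then runs as follows.
\begin{enumerate}
\item Suppose $Q$ has no edge of type $(3,3)$ or $(3,4)$, and no face diagonal joining two vertices of $Q$-degree $3$.
\item By the observation, a vertex of $Q$-degree $d\ge 5$ has no two consecutive neighbours of $Q$-degree $3$. So it has at most $\lfloor d/2\rfloor$ of them.
\item Give each vertex charge $\deg_Q(v)-4$; the total is $-8$. Let each vertex of $Q$-degree $d\ge 5$ send $1/3$ to each neighbour of $Q$-degree $3$.
\item Such a vertex keeps at least $d-4-d/6>0$.
\item Each vertex of $Q$-degree $3$ has three neighbours of $Q$-degree at least $5$, since $\deg_G\ge 6$ forces $\deg_Q\ge 3$. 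It therefore ends with exactly $0$.
\end{enumerate}
All final charges are nonnegative, contradicting the total of $-8$. Hence $G$ has a $(6,6)$- or $(6,8)$-edge, and your bound $\xi(G)\le 12$ follows without citing \cite{MR2974037}.
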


Furthermore, the authors of \cite{MR4665697} conjectured that no optimal $1$-planar graph has $\xi(G)=8$. In this paper, we begin by establishing the following theorem, thereby confirming the conjecture.

\begin{theorem}\label{thm:main}
Let $G$ be an optimal $1$-planar graph. Then
$
    \xi(G)\in \{10,12\}.
$

\end{theorem}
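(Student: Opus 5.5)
By Theorem~\ref{lem:xiG} it suffices to show that no optimal $1$-planar graph $G$ has a restricted edge-cut of size $8$. We argue by contradiction: let $F$ be a minimum restricted edge-cut with $|F|=8$, and let $A$ be the vertex set of one component of $G-F$ and $B=V(G)\setminus A$. By minimality, $G[A]$ and $G[B]$ are connected, each of order at least $2$, and $F$ is exactly the set of edges between $A$ and $B$. Write $a=|A|$ and $b=|B|$, with $n=a+b$.

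\textbf{Counting step.} We have $e(G[A])+e(G[B])=4n-8-8=4a+4b-16$. Every induced subgraph of a $1$-planar graph is $1$-planar. Hence $e(G[A])\le 4a-8$ when $a\ge 3$, while for $a=2$ we have $e(G[A])\le 1$; the same holds for $B$.

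\begin{itemize}[leftmargin=*]
\item If $a=2$, then $e(G[B])\ge 4b-9$. The two vertices of $A$ have degree at least $6$, so they send at least $12-2=10$ edges across, contradicting $|F|=8$.
\item If $a=3$, the three vertices send at least $18-2\cdot 3=12>8$ edges across. Sets with $a\le 3$ are therefore impossible, and by the same argument so are those with $b\le 3$.
\end{itemize}

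So both sides have at least $4$ vertices. Since neither side exceeds $4k-8$ edges on $k$ vertices, the total $4a+4b-16$ forces $e(G[A])=4a-8$ and $e(G[B])=4b-8$. Both induced subgraphs are therefore optimal $1$-planar graphs, which in particular requires $a,b\ge 8$. (If some side had $4k-9$ edges, the other would need $4k'-7$ edges, which is impossible.)

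\textbf{Structural step.} We must show that an optimal $1$-planar graph cannot contain a vertex set $A$, with $G[A]$ optimal $1$-planar and $G[V\setminus A]$ optimal $1$-planar, joined by only $8$ edges. I would use the known structure of optimal $1$-plane graphs: the planar skeleton obtained by deleting crossing edges is a $3$-connected quadrangulation $Q$, and $G$ is $Q$ with both diagonals added in every face. Consider the restriction of the $1$-plane drawing of $G$ to $G[A]$. This restricted drawing is still $1$-planar with $4a-8$ edges, so its planarization must be a quadrangulation with all crossings. The key claim to prove is that every region of this drawing is a quadrangle bounded by skeleton edges. Hence $B$, which is connected via the edges of $G[B]$ since $G[B]$ is connected, lies inside a single quadrangular face $f=v_1v_2v_3v_4$ of the skeleton of $G[A]$. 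There $B$ has at most $4$ attachment vertices $v_1,\dots,v_4$.

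Now count the edges of $F$ from inside the face. The vertices of $B$ drawn in $f$, together with $v_1,\dots,v_4$, form a $1$-plane graph $H$ on $b+4$ vertices. Its edges are $G[B]$, the edges of $F$, and the boundary $4$-cycle. In $G[A]$ the face $f$ carried the two diagonals $v_1v_3$ and $v_2v_4$; these belong to $G[A]$, yet they must cross each other inside $f$, and they cannot cross any edge of $B$ within $f$. So either the diagonals are drawn elsewhere, or $B$ is squeezed into one of the four triangular subregions they create. In the latter case $B$ attaches to at most $3$ vertices, so $\{v_i,v_j,v_k\}$ together with a crossing point separates $B$. Counting via Euler, the subregion contains a $1$-planar graph on $b+3$ vertices with at most $4(b+3)-8$ edges, giving $|F|\le 4(b+3)-8-(4b-8)-3=9$. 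Refining this bound using degrees, I would aim to show that it is impossible to get exactly $8$ and $4b-8$ simultaneously with the boundary constraints. Alternatively, one can use that $G$ is $4$-connected and that the attachment vertices form a vertex cut of size at most $4$ with specific structure, then derive a contradiction from $\kappa(G)\in\{4,6\}$ and the description of $4$-cuts in optimal $1$-planar graphs.

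\textbf{Main obstacle.} The main obstacle is the structural step: rigorously showing that an optimal $1$-planar induced subgraph occupies the drawing so that the complementary side lies in one face, and then extracting the sharp count. My first attempt would be a direct Euler-type count on the planarization of the sub-drawing of $G$ consisting of $G[A]\cup G[B]\cup F$. Summing face degrees, I would show that $8$ cross edges leave a non-quadrangular face or an uncrossed non-skeleton configuration, which contradicts the optimality of $G$, whose planarization has only triangular faces.
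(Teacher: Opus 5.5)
\textbf{There is a genuine gap: the structural step is never completed.}

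Your counting step is sound and coincides with the paper's argument (Claims~\ref{claim:1} and~\ref{claim:opxy}). Both sides have at least three vertices, and $e(G[A])+e(G[B])=(4a-8)+(4b-8)$ together with Lemma~\ref{lem:4n-8} forces $G[A]$ and $G[B]$ to be optimal $1$-planar.

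The structural step, however, carries the whole content of the theorem, and you do not finish it:
\begin{itemize}
\item Your Euler count only gives $|F|\le 9$, which contradicts nothing.
\item ``Refining this bound using degrees'' is an intention, not an argument.
\item The alternative via a vertex cut of size at most $4$ cannot work as stated. Optimal $1$-planar graphs with $\kappa(G)=4$ exist, so a $4$-cut gives no contradiction.
\end{itemize}
Two intermediate claims are also unjustified or wrong:
\begin{itemize}
\item Connectivity of $G[B]$ does not place $B$ inside a single face. The skeleton edges of $G[A]$ are uncrossed in the restricted drawing, so in the drawing of $G$ each may be crossed once by an edge of $G[B]$ or of $F$.
\item Each of the four triangular subregions that the crossing diagonals cut out of $f$ has only two true vertices on its boundary, not three.
\end{itemize}

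\textbf{What is missing is a short Menger argument.} The paper isolates it as Lemma~\ref{lem:proper}: no proper subgraph of an optimal $1$-planar graph is optimal $1$-planar. It needs neither the single-face claim nor any count of $F$.
\begin{itemize}
\item The drawing of $G[A]$ inherited from $G$ is an optimal $1$-plane drawing. By Lemma~\ref{lem:qad}, every face of its planarization is a triangle with two true vertices $p,q$ and one crossing point.
\item Any $v\in B$ lies in the interior of such a face $\Gamma$. Choose $w\in A\setminus\{p,q\}$.
\item The two sides of $\Gamma$ meeting at the crossing point are pieces of edges that are already crossed, so no edge of $G$ can cross them.
\item Hence a $v$--$w$ path can leave $\Gamma$ only through $p$, through $q$, or along the unique edge (if any) crossing the skeleton edge $pq$.
\item So there are at most three internally disjoint $v$--$w$ paths, contradicting the $4$-connectivity of $G$.
\end{itemize}
Your counting step already shows that $G[A]$ is an optimal $1$-planar proper subgraph of $G$, so this argument finishes the proof.
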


Note that both values, $10$ and $12$, are attained by optimal
$1$-planar graphs~\cite{MR4665697}. 
Furthermore, we establish several structural properties of minimum restricted edge-cuts in optimal 1-planar graphs.
We first establish a close connection between 
certain special edges and  minimum restricted edge-cuts. Let $G$ be a graph. For a vertex $u\in V(G)$, let $d_G(u)$ denote the \emph{degree} of $u$ in $G$, i.e., the number of edges incident with $u$.
An edge $e = uv$ with endvertices $u$ and $v$ in  $G$ is called an \emph{$(a,b)$-edge }
if $d_G(u) = a$ and $d_G(v) = b$.
Using a result of   Hud\'ak and \v Sugerek \cite{MR2974037}, the authors of \cite{MR4665697} proved that every optimal
1-planar graph contains a \((6,6)\)-edge or a $(6,8)$-edge.
The following observation is straightforward.
\begin{observation}
  If an optimal $1$-planar graph $G$
contains a $(6,6)$-edge $uv$, then $\xi(G)=10$.
\end{observation}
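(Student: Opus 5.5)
The plan is to exhibit a restricted edge-cut of size $10$ and then use Theorem~\ref{thm:main} for the matching lower bound.

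\textbf{Construction.} Let $uv$ be a $(6,6)$-edge of $G$. Put $X=\{u,v\}$ and let $F=[X,V(G)\setminus X]$ be the set of edges with exactly one endvertex in $X$. Each of $u$ and $v$ sends one of its six edges to the other, so
\[
|F| = (d_G(u)-1)+(d_G(v)-1) = 5+5 = 10 .
\]

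\textbf{$F$ is a restricted edge-cut.} The component of $G-F$ containing $u$ is the edge $uv$ itself, which has two vertices. It remains to show that every component of $G-F$ inside $G-X$ has at least two vertices.

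Suppose, for contradiction, that some $w\in V(G)\setminus X$ is isolated in $G-F$. Then $N_G(w)\subseteq\{u,v\}$, so $d_G(w)\le 2$. This contradicts the fact that every optimal $1$-planar graph is $4$-connected, by Suzuki~\cite{MR2746706}, and hence has minimum degree at least $4$. Equivalently, it contradicts $\lambda(G)=6$ from~\cite{MR4665697}.

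One more point needs checking: $G-F$ must actually be disconnected. Since $G$ has at least $8$ vertices, $V(G)\setminus X$ is nonempty, so $F$ is a genuine edge-cut. Therefore $\xi(G)\le 10$.

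\textbf{Lower bound.} By Theorem~\ref{thm:main}, $\xi(G)\in\{10,12\}$, so $\xi(G)\ge 10$. Combined with the upper bound, $\xi(G)=10$.

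The only nontrivial input is Theorem~\ref{thm:main}. If one wanted to avoid it and rely only on Theorem~\ref{lem:xiG}, the obstacle would be ruling out $\xi(G)=8$, and that is exactly the main content of the paper. The rest of the argument is routine.
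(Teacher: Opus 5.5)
Your proposal is correct and follows the paper's argument: the paper also takes $F=[\{u,v\},V(G)\setminus\{u,v\}]$, checks $|F|=10$ and that $G-F$ has no isolated vertex (there via $d_{G-F}(w)\ge d_G(w)-2\ge 4$ from minimum degree $6$, where you use $4$-connectivity), and then invokes Theorem~\ref{thm:main} for the lower bound $\xi(G)\ge 10$.
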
 
Indeed,  an optimal 1-planar graph $G$ that contains a $(6,6)$-edge $uv$ has a restricted edge-cut of
size $10$: namely,
$   \{e\in E(G): e \text{ is incident with } u \text{ or } v\}\setminus\{uv\}.
$
Thus $\xi(G)\le 10$. Together with Theorem \ref{thm:main}, this implies $\xi(G)=10$.

The above observation naturally raises the question: is the converse true? That is, does $\xi(G)=10$ imply the existence of a $(6,6)$-edge? Unfortunately, the following example shows that this is not always true.
Let $G$ be the 1-plane graph shown in Figure \ref{fig:1}. It is readily checked from the
drawing that $G$ is an optimal 1-planar graph and that $G$ contains
no $(6,6)$-edge (all the vertices of degree $6$ are shown in black).  The ten edges joining vertices in $\{x,y,z,w\}$ to vertices in $\{x',y',z',w'\}$, shown in red in the electronic version of Figure~\ref{fig:1}, form a restricted edge-cut of size 10.
Combining this with Theorem \ref{thm:main},
 we have
\(\xi(G)=10\).

\begin{figure}[H]
\centering

\begin{tikzpicture}[scale=0.5, bezier bounding box]
	\begin{pgfonlayer}{nodelayer}
		\node [style=whitenode] (104) at (-7.248, 3.4595) {};
		\node [style=whitenode] (105) at (-6.8373, 4.6808) {};
		\node [style=whitenode] (106) at (-5.5351, 4.556) {};
		\node [style=blacknode] (107) at (-6.2495, 3.8074) {};
		\node [style=whitenode] (108) at (-5.0117, 0.9732) {};
		\node [style=whitenode] (109) at (-4.8327, 1.9949) {};
		\node [style=blacknode] (110) at (-5.5736, 2.1968) {};
		\node [style=whitenode] (111) at (-6.2492, 1.734) {};
		\node [style=whitenode] (112) at (-8.0221, 4.6607) {};
		\node [style=blacknode] (113) at (-7.9357, 3.544) {};
		\node [style=whitenode] (114) at (-8.9597, 0.9757) {};
		\node [style=blacknode] (115) at (-8.2156, 1.6987) {};
		\node [style=blacknode] (116) at (-4.5783, 5.1999) {};
		\node [style=whitenode] (117) at (-5.5849, 5.77) {};
		\node [style=whitenode] (118) at (-2.9172, -0.0238) {$z$};
		\node [style=blacknode] (119) at (-3.9798, 2.0993) {};
		\node [style=blacknode] (120) at (-7.3602, 5.8276) {};
		\node [style=whitenode] (121) at (-10.0025, 0) {$w$};
		\node [style=whitenode] (122) at (-4.9288, 3.2036) {};
		\node [style=whitenode] (123) at (-5.8674, 3.0377) {};
		\node [style=whitenode] (124) at (-8.2043, 2.953) {};
		\node [style=whitenode] (125) at (-9.0366, 3.9988) {};
		\node [style=whitenode] (126) at (-3.988, 4.2485) {};
		\node [style=whitenode] (127) at (-10.0025, 7) {$x$};
		\node [style=whitenode] (128) at (-6.9986, 2.4643) {};
		\node [style=whitenode] (129) at (-2.9313, 7) {$y$};
		\node [style=whitenode] (130) at (3.677, 3.4595) {};
		\node [style=whitenode] (131) at (4.0877, 4.6808) {};
		\node [style=whitenode] (132) at (5.3899, 4.556) {};
		\node [style=blacknode] (133) at (4.9255, 3.8074) {};
		\node [style=whitenode] (134) at (5.9133, 0.9732) {};
		\node [style=whitenode] (135) at (6.0923, 1.9949) {};
		\node [style=blacknode] (136) at (5.3514, 2.1968) {};
		\node [style=whitenode] (137) at (4.6758, 1.734) {};
		\node [style=whitenode] (138) at (2.9029, 4.6607) {};
		\node [style=blacknode] (139) at (2.9893, 3.544) {};
		\node [style=whitenode] (140) at (1.9653, 0.9757) {};
		\node [style=blacknode] (141) at (2.7094, 1.6987) {};
		\node [style=blacknode] (142) at (6.3467, 5.1999) {};
		\node [style=whitenode] (143) at (5.3401, 5.77) {};
		\node [style=whitenode] (144) at (8.0078, -0.0238) {$z'$};
		\node [style=blacknode] (145) at (6.9452, 2.0993) {};
		\node [style=blacknode] (146) at (3.5648, 5.8276) {};
		\node [style=whitenode] (147) at (0.9225, 0) {$w'$};
		\node [style=whitenode] (148) at (5.9962, 3.2036) {};
		\node [style=whitenode] (149) at (5.0576, 3.0377) {};
		\node [style=whitenode] (150) at (2.7207, 2.953) {};
		\node [style=whitenode] (151) at (1.8884, 3.9988) {};
		\node [style=whitenode] (152) at (6.937, 4.2485) {};
		\node [style=whitenode] (153) at (0.9225, 7) {$x'$};
		\node [style=whitenode] (154) at (3.9264, 2.4643) {};
		\node [style=whitenode] (155) at (7.9937, 7) {$y'$};
		\node [style=none] (156) at (5.75, 10.75) {};
	\end{pgfonlayer}
	\begin{pgfonlayer}{edgelayer}
		\draw [style=blackedge] (104) to (105);
		\draw [style=blackedge] (104) to (113);
		\draw [style=blackedge] (104) to (128);
		\draw [style=blackedge] (104) to (107);
		\draw [style=blackedge] (105) to (106);
		\draw [style=blackedge] (105) to (117);
		\draw [style=blackedge] (105) to (112);
		\draw [style=blackedge] (106) to (107);
		\draw [style=blackedge] (106) to (122);
		\draw [style=blackedge] (106) to (116);
		\draw [style=blackedge] (107) to (123);
		\draw [style=blackedge] (108) to (111);
		\draw [style=blackedge] (108) to (114);
		\draw [style=blackedge] (108) to (118);
		\draw [style=blackedge] (108) to (109);
		\draw [style=blackedge] (109) to (110);
		\draw [style=blackedge] (109) to (119);
		\draw [style=blackedge] (109) to (122);
		\draw [style=blackedge] (110) to (123);
		\draw [style=blackedge] (110) to (111);
		\draw [style=blackedge] (111) to (128);
		\draw [style=blackedge] (111) to (115);
		\draw [style=blackedge] (112) to (120);
		\draw [style=blackedge] (112) to (125);
		\draw [style=blackedge] (112) to (113);
		\draw [style=blackedge] (113) to (124);
		\draw [style=blackedge] (114) to (115);
		\draw [style=blackedge] (114) to (125);
		\draw [style=blackedge] (114) to (121);
		\draw [style=blackedge] (115) to (124);
		\draw [style=blackedge] (116) to (126);
		\draw [style=blackedge] (116) to (117);
		\draw [style=blackedge] (117) to (129);
		\draw [style=blackedge] (117) to (120);
		\draw [style=blackedge] (118) to (121);
		\draw [style=blackedge] (118) to (129);
		\draw [style=blackedge] (118) to (119);
		\draw [style=blackedge] (119) to (126);
		\draw [style=blackedge] (120) to (127);
		\draw [style=blackedge] (121) to (127);
		\draw [style=blackedge] (122) to (123);
		\draw [style=blackedge] (122) to (126);
		\draw [style=blackedge] (123) to (128);
		\draw [style=blackedge] (124) to (125);
		\draw [style=blackedge] (124) to (128);
		\draw [style=blackedge] (125) to (127);
		\draw [style=blackedge] (126) to (129);
		\draw [style=blackedge] (127) to (129);
		\draw [style=blueedge] (120) to (125);
		\draw [style=blueedge] (127) to (112);
		\draw [style=blueedge] (112) to (117);
		\draw [style=blueedge] (120) to (105);
		\draw [style=blueedge] (105) to (113);
		\draw [style=blueedge] (112) to (104);
		\draw [style=blueedge] (125) to (113);
		\draw [style=blueedge, bend right] (112) to (124);
		\draw [style=blueedge] (105) to (107);
		\draw [style=blueedge] (104) to (106);
		\draw [style=blueedge] (117) to (106);
		\draw [style=blueedge] (105) to (116);
		\draw [style=blueedge] (116) to (122);
		\draw [style=blueedge] (106) to (126);
		\draw [style=blueedge] (106) to (123);
		\draw [style=blueedge] (107) to (122);
		\draw [style=blueedge] (104) to (123);
		\draw [style=blueedge] (107) to (128);
		\draw [style=blueedge] (128) to (113);
		\draw [style=blueedge] (104) to (124);
		\draw [style=blueedge] (124) to (111);
		\draw [style=blueedge] (128) to (115);
		\draw [style=blueedge, in=285, out=105] (115) to (125);
		\draw [style=blueedge] (124) to (114);
		\draw [style=blueedge] (110) to (128);
		\draw [style=blueedge] (123) to (111);
		\draw [style=blueedge] (111) to (109);
		\draw [style=blueedge] (110) to (108);
		\draw [style=blueedge] (110) to (122);
		\draw [style=blueedge] (123) to (109);
		\draw [style=blueedge] (122) to (119);
		\draw [style=blueedge] (126) to (109);
		\draw [style=blueedge] (109) to (118);
		\draw [style=blueedge] (108) to (119);
		\draw [style=blueedge] (115) to (108);
		\draw [style=blueedge] (114) to (111);
		\draw [style=blueedge] (118) to (114);
		\draw [style=blueedge] (121) to (108);
		\draw [style=blueedge] (121) to (125);
		\draw [style=blueedge] (127) to (114);
		\draw [style=blueedge] (127) to (117);
		\draw [style=blueedge] (120) to (129);
		\draw [style=blueedge] (129) to (116);
		\draw [style=blueedge, bend left=60, looseness=1.25] (117) to (126);
		\draw [style=blueedge] (126) to (118);
		\draw [style=blueedge] (119) to (129);
		\draw [style=blackedge] (130) to (131);
		\draw [style=blackedge] (130) to (139);
		\draw [style=blackedge] (130) to (154);
		\draw [style=blackedge] (130) to (133);
		\draw [style=blackedge] (131) to (132);
		\draw [style=blackedge] (131) to (143);
		\draw [style=blackedge] (131) to (138);
		\draw [style=blackedge] (132) to (133);
		\draw [style=blackedge] (132) to (148);
		\draw [style=blackedge] (132) to (142);
		\draw [style=blackedge] (133) to (149);
		\draw [style=blackedge] (134) to (137);
		\draw [style=blackedge] (134) to (140);
		\draw [style=blackedge] (134) to (144);
		\draw [style=blackedge] (134) to (135);
		\draw [style=blackedge] (135) to (136);
		\draw [style=blackedge] (135) to (145);
		\draw [style=blackedge] (135) to (148);
		\draw [style=blackedge] (136) to (149);
		\draw [style=blackedge] (136) to (137);
		\draw [style=blackedge] (137) to (154);
		\draw [style=blackedge] (137) to (141);
		\draw [style=blackedge] (138) to (146);
		\draw [style=blackedge] (138) to (151);
		\draw [style=blackedge] (138) to (139);
		\draw [style=blackedge] (139) to (150);
		\draw [style=blackedge] (140) to (141);
		\draw [style=blackedge] (140) to (151);
		\draw [style=blackedge] (140) to (147);
		\draw [style=blackedge] (141) to (150);
		\draw [style=blackedge] (142) to (152);
		\draw [style=blackedge] (142) to (143);
		\draw [style=blackedge] (143) to (155);
		\draw [style=blackedge] (143) to (146);
		\draw [style=blackedge] (144) to (147);
		\draw [style=blackedge] (144) to (155);
		\draw [style=blackedge] (144) to (145);
		\draw [style=blackedge] (145) to (152);
		\draw [style=blackedge] (146) to (153);
		\draw [style=blackedge] (147) to (153);
		\draw [style=blackedge] (148) to (149);
		\draw [style=blackedge] (148) to (152);
		\draw [style=blackedge] (149) to (154);
		\draw [style=blackedge] (150) to (151);
		\draw [style=blackedge] (150) to (154);
		\draw [style=blackedge] (151) to (153);
		\draw [style=blackedge] (152) to (155);
		\draw [style=blackedge] (153) to (155);
		\draw [style=blueedge] (146) to (151);
		\draw [style=blueedge] (153) to (138);
		\draw [style=blueedge] (138) to (143);
		\draw [style=blueedge] (146) to (131);
		\draw [style=blueedge] (131) to (139);
		\draw [style=blueedge] (138) to (130);
		\draw [style=blueedge] (151) to (139);
		\draw [style=blueedge, bend right] (138) to (150);
		\draw [style=blueedge] (131) to (133);
		\draw [style=blueedge] (130) to (132);
		\draw [style=blueedge] (143) to (132);
		\draw [style=blueedge] (131) to (142);
		\draw [style=blueedge] (142) to (148);
		\draw [style=blueedge] (132) to (152);
		\draw [style=blueedge] (132) to (149);
		\draw [style=blueedge] (133) to (148);
		\draw [style=blueedge] (130) to (149);
		\draw [style=blueedge] (133) to (154);
		\draw [style=blueedge] (154) to (139);
		\draw [style=blueedge] (130) to (150);
		\draw [style=blueedge] (150) to (137);
		\draw [style=blueedge] (154) to (141);
		\draw [style=blueedge, in=285, out=105] (141) to (151);
		\draw [style=blueedge] (150) to (140);
		\draw [style=blueedge] (136) to (154);
		\draw [style=blueedge] (149) to (137);
		\draw [style=blueedge] (137) to (135);
		\draw [style=blueedge] (136) to (134);
		\draw [style=blueedge] (136) to (148);
		\draw [style=blueedge] (149) to (135);
		\draw [style=blueedge] (148) to (145);
		\draw [style=blueedge] (152) to (135);
		\draw [style=blueedge] (135) to (144);
		\draw [style=blueedge] (134) to (145);
		\draw [style=blueedge] (141) to (134);
		\draw [style=blueedge] (140) to (137);
		\draw [style=blueedge] (144) to (140);
		\draw [style=blueedge] (147) to (134);
		\draw [style=blueedge] (147) to (151);
		\draw [style=blueedge] (153) to (140);
		\draw [style=blueedge] (153) to (143);
		\draw [style=blueedge] (146) to (155);
		\draw [style=blueedge] (155) to (142);
		\draw [style=blueedge, bend left=60, looseness=1.25] (143) to (152);
		\draw [style=blueedge] (152) to (144);
		\draw [style=blueedge] (145) to (155);
		\draw [style=rededge, bend left] (127) to (153);
		\draw [style=rededge, bend left=45] (127) to (155);
		\draw [style=blueedge, bend left=60, looseness=2.25] (153) to (144);
		\draw [style=rededge] (153) to (118);
		\draw [style=blueedge, bend right=60, looseness=2.25] (118) to (127);
		\draw [style=rededge] (129) to (153);
		\draw [style=rededge, bend right] (121) to (147);
		\draw [style=rededge] (118) to (147);
		\draw [style=rededge, bend left=45, looseness=1.75] (153) to (121);
		\draw [style=rededge, bend right=315] (144) to (121);
		\draw [style=rededge, in=-135, out=-120, looseness=2.00] (127) to (147);
		\draw [style=rededge, in=150, out=60, looseness=0.75] (127) to (156.center);
		\draw [style=rededge, in=60, out=-30, looseness=1.25] (156.center) to (144);
	\end{pgfonlayer}
\end{tikzpicture}

\caption{An optimal 1-planar graph $G$ with $\xi(G)=10$ and without (6, 6)-edges; the 10 red edges form a minimum restricted edge-cut of $G$.}
\label{fig:1}
\end{figure}

Note that the optimal 1-plane graph in Figure~\ref{fig:1}, which provides a counterexample to the converse, has vertex-connectivity 4 ($\{x,y,z, w\}$ is a vertex-cut). Recall that every optimal 1-planar graph has vertex-connectivity either 4 or 6. We further show that the converse does hold when $G$ has vertex-connectivity 6.



\begin{theorem}\label{thm:six-connected-characterization}
Let $G$ be an optimal 1-planar graph with $\kappa(G)=6$. Then
$
        \xi(G)=10
$
if and only if $G$ contains a $(6,6)$-edge. 
\end{theorem}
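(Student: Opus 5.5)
The "if" direction is the Observation above combined with Theorem~\ref{thm:main}, so the plan concerns only the converse. Assume $\kappa(G)=6$ and $\xi(G)=10$, and let $F$ be a minimum restricted edge-cut of size $10$. As usual we may take $G-F$ to have exactly two components, with vertex sets $A$ and $B=V(G)\setminus A$, both of size at least $2$. Then $e(A,B)=10$.

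\textbf{Step 1: small side.} First suppose $|A|=2$, say $A=\{u,v\}$. Then $uv\in E(G)$, since otherwise $u,v$ would be isolated. Hence $d(u)+d(v)-2=10$, so $d(u)+d(v)=12$. Every vertex of an optimal 1-planar graph has even degree at least $6$, which forces $d(u)=d(v)=6$, and $uv$ is the required $(6,6)$-edge. The same counting disposes of $|B|=2$.

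\textbf{Step 2: large sides give a near-optimal induced subgraph.} So suppose $|A|,|B|\ge 3$. I then count edges. Since $G[A]$ is $1$-planar,
\[
\sum_{a\in A} d(a) = 2|E(G[A])| + 10 \le 2(4|A|-8)+10 = 8|A|-6.
\]
Similarly, $|E(G[A])|+|E(G[B])| = 4n-8-10 = 4n-18$. Each of $|E(G[A])|\le 4|A|-8$ and $|E(G[B])|\le 4|B|-8$, and their sum is $4n-18$, so together they have total slack $2$ against the $1$-planar bound.

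The subgraph $G[A]$ cannot have exactly $4|A|-8$ edges. If it did, it would be optimal, hence $6$-edge-connected with every vertex of degree at least $6$. Using the structure of optimal 1-planar graphs, a proper optimal induced subgraph of $G$ is attached to the rest of $G$ along a 4-cycle (a separating quadrangle), and that 4-cycle would be a $4$-vertex-cut. This contradicts $\kappa(G)=6$.

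Because $4|A|-8$ and $4n-18$ force the slack to split as $(1,1)$ or $(0,2)$, and $(0,2)$ has just been excluded on both sides, we get $|E(G[A])|=4|A|-9$ and $|E(G[B])|=4|B|-9$.

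\textbf{Step 3: apply the $4n-9$ characterization.} The key ingredient mentioned in the abstract is a characterization of induced subgraphs on $k$ vertices with $4k-9$ edges in optimal 1-planar graphs. I expect it to say roughly that such a subgraph is an optimal 1-planar graph minus one edge, or is bounded by a short closed walk in the planar skeleton (quadrangulation) of $G$. In that situation the edges leaving $A$ pass through a bounded set of boundary vertices, and those boundary vertices form a vertex-cut of $G$.

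I would use the characterization to show the following. Unless $A$ (or $B$) is a single edge together with its neighbourhood structure, the boundary of $A$ in the quadrangulation yields a vertex-cut of size at most $5$, hence of size $4$, which contradicts $\kappa(G)=6$. The only case surviving this is $|A|=2$ or $|B|=2$, and that case was handled in Step 1.

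\textbf{Main obstacle.} Step 3 is the crux: converting "exactly $10$ edges leave a $(4k-9)$-edge induced subgraph" into a small vertex-cut. I would work in the planar quadrangulation $Q$ underlying $G$, where every face of $Q$ carries two crossing edges of $G$. The $10$ edges of $F$ are then split between $Q$-edges and crossing pairs in faces meeting both $A$ and $B$. The aim is to show that the faces of $Q$ straddling $A$ and $B$ form a cyclic sequence of length at most $5$, whose shared vertices give a separating cycle of length at most $5$ in $Q$. Since separating cycles in a quadrangulation are even, that cycle has length $4$, giving a $4$-vertex-cut and the contradiction.
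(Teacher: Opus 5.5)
Your ``if'' direction, the case $|A|=2$, and the slack bookkeeping in Step~2 are fine. The ``only if'' direction, however, has a genuine gap: Step~3, which carries its entire content, is only a conjecture. You give no argument that the faces of $Q$ straddling $A$ and $B$ form a single cyclic sequence of length at most $5$. Neither the cyclic structure nor the bound $5$ is justified, nor is the claim that the shared vertices form a cycle rather than a closed walk. You would also still need to check that a separating $4$-cycle of $Q$ remains a vertex-cut of $G$ with both sides nonempty.

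The missing idea is a crossing count on one side only. Suppose $x=|A|\ge 6$ and $G[A]$ has $4x-9$ edges. Then its inherited drawing has at least $(4x-9)-(3x-6)=x-3$ crossings. Each of these is the crossing inside a face of $Q$ whose four corners all lie in $A$. Let $H$ be the union of the boundaries of these faces, with components $H_1,\dots,H_s$ on $v_1,\dots,v_s$ vertices. Each $H_i$ is a bipartite plane graph, so it has at most $v_i-2$ faces. Not all of them can be faces of $Q$, since otherwise $G[V(H_i)]$ would be a proper optimal subgraph. Hence the number of crossings is at most $\sum_i(v_i-3)\le x-3s\le x-3$. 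Equality throughout forces $H$ to be a connected quadrangulation spanning $A$ with exactly one face $\phi$ that is not a face of $Q$, and bipartiteness of $Q$ then gives $Q[A]=H$. The boundary $4$-cycle $C$ of $\phi$ separates $A\setminus V(C)\ne\emptyset$ from $B$ in $Q$. It does so also in $G$, because every crossing edge of $G$ lies inside a single face of $Q$. Thus $\kappa(G)=4$, and the distribution of the ten cut edges never needs to be analysed.

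There are two smaller points. First, in Step~2 your exclusion of an optimal $G[A]$ rests on an unproved picture (``attached along a separating quadrangle''). What holds, and what you should prove, is that no proper subgraph of an optimal $1$-planar graph is optimal; this needs no assumption on $\kappa(G)$. In the planarization of the inherited drawing of $G[A]$, every face is a triangle with two true vertices $a,b$ and one crossing point. So a vertex of $B$ inside such a face is joined to a vertex of $A\setminus\{a,b\}$ by at most three internally disjoint paths (through $a$, through $b$, or across the edge $ab$), contradicting $4$-connectivity.

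Second, Step~3 silently skips $3\le |A|\le 5$. For $|A|=3$ the set $A$ induces a triangle, so $|F|\ge 3\cdot 6-2\cdot 3=12$, a contradiction. The cases $|A|\in\{4,5\}$ are impossible since $4k-9>\binom{k}{2}$. These cases need this separate counting, not a separating-cycle argument.
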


The main difficulty lies in proving the ``only if'' direction of Theorem \ref{thm:six-connected-characterization}.  The main idea of the proof can be outlined as follows, although the formal proof will be organized somewhat differently.  We first investigate the structure of minimum restricted edge-cuts of size 10 in optimal $1$-planar graphs. We then argue by contradiction that if an optimal $1$-planar graph $G$ contains no $(6,6)$-edge and satisfies $\xi(G)=10$, then $G$ must contain a vertex-induced subgraph with $4n-9$ edges. The existence of such a subgraph implies that $G$ has vertex-connectivity $4$, contradicting the assumption that $\kappa(G)=6$. An important ingredient in the proof is a structural characterization of almost optimal vertex-induced subgraphs of optimal 1-planar graphs, which we establish in Section~\ref{sec:near}.

The remainder of this paper is organized as follows. Section \ref{sec:pre} introduces some notation and preliminary lemmas. We prove Theorem \ref{thm:main} in Section \ref{sec:thm1}. To prove Theorem \ref{thm:six-connected-characterization}, we introduce the notion of almost optimal 1-planar graphs and establish some of their properties in Section \ref{sec:near}. Finally, Section \ref{sec:proofTheorem2} contains the proof of Theorem \ref{thm:six-connected-characterization}.

\section{Preliminaries}\label{sec:pre}

In this section, we introduce terminology and notation and present several lemmas. Let $\mathbb R^{2}$ denote the Euclidean plane, and let $G$ be a connected plane graph. The connected components of $\mathbb R^{2}\setminus G$ are called the \emph{faces} of $G$.  For a face $\phi$ of $G$, let $\partial\phi$ denote its boundary.
The vertices and edges occurring along $\partial\phi$, counted with
multiplicity, form a closed boundary walk. If the boundary walk $\partial\phi$ is a cycle, then this cycle is
called the \emph{boundary cycle} of $\phi$.  A cycle is called a \emph{$k$-cycle} if it has $k$ edges.  A \emph{triangular face} is a face bounded by a 3-cycle, and a \emph{quadrilateral face} is  a face bounded by a 4-cycle. A \emph{quadrangulation} is a plane graph in which every face is quadrilateral.



Let $G$ be a graph.  For two disjoint vertex sets
$A,B\subseteq V(G)$, let
$
    [A,B]_G
    :=
    \{uv\in E(G): u\in A,\ v\in B\}.
$
When the graph is clear from context, we simply write $[A,B]$.
For sets $A$ and $B$, we write
$
A\subsetneq B
$
if $A$ is a \emph{proper subset} of $B$, that is,
$
A\subseteq B
$
and
$
A\ne B.
$
For $A\subseteq V(G)$, the (vertex) induced subgraph of $G$ on $A$
is denoted by $G[A]$.

For undefined terms, we refer the reader to \cite{MR2368647}.

 An edge-cut $F$ of a graph $G$ is \emph{minimal } if no proper subset of $F$ is an edge-cut. We first recall two standard facts concerning minimal edge-cuts and
minimum restricted edge-cuts.
\begin{lemma}[\cite{MR2368647}, page 62]\label{lem:twocompnents}
Let $G$ be a connected graph. If $F$ is a minimal edge-cut, then $G-F$ has
exactly two components.
\end{lemma}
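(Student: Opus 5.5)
The plan is to argue by contradiction, using only connectivity and minimality. Let $F$ be a minimal edge-cut of the connected graph $G$. Since $F$ is an edge-cut, $G-F$ is disconnected, so it has at least two components. Suppose, for a contradiction, that $G-F$ has at least three components.

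First, I pick one component $C_1$ of $G-F$ and set $A=V(C_1)$ and $B=V(G)\setminus A$. Because there are at least three components, $B$ is nonempty.

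Next, I note that every edge of $[A,B]_G$ belongs to $F$. Indeed, an edge of $G-F$ with one end in $A$ would have to lie inside the component $C_1$, so it cannot leave $A$.

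Then I show that $[A,B]_G$ is itself an edge-cut. The graph $G-[A,B]_G$ has no edge between $A$ and $B$, and both sets are nonempty, so it is disconnected. By minimality of $F$, this forces $[A,B]_G=F$.

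The contradiction comes from the remaining components. Let $C_2$ and $C_3$ be two components of $G-F$ other than $C_1$, so both lie inside $B$. Since $G$ is connected, some edge $e$ of $G$ leaves $V(C_2)$. This edge $e$ is not in $G-F$, because $C_2$ is a component there, so $e\in F=[A,B]_G$. Hence $e$ has its other endpoint in $A=V(C_1)$.

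The same argument applied to any component shows that every component other than $C_1$ is joined by an edge of $F$ only to $C_1$. Now I repeat the whole construction with $A'=V(C_2)$ in place of $A$, which gives $F=[V(C_2),V(G)\setminus V(C_2)]_G$. So every edge of $F$ is incident with $C_2$.

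By the same reasoning, $C_3$ is joined to the rest of $G$ by some edge of $F$. That edge is not incident with $C_2$, since the previous step showed $C_3$'s edges in $F$ go only to $C_1$. This contradicts the fact that every edge of $F$ is incident with $C_2$. Therefore $G-F$ has exactly two components.

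The only delicate point is to apply the minimality of $F$ to a well-chosen sub-cut, namely the edges leaving a single component. Everything else is routine bookkeeping.
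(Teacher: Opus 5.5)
Your proof is correct. It uses the standard argument behind the cited textbook fact: the edges leaving a single component of $G-F$ form an edge-cut contained in $F$, so minimality forces equality, and the connectivity of $G$ applied to the remaining components yields the contradiction. The paper gives no proof of its own, only the reference to Bondy--Murty, where I believe the fact appears in the equivalent form that a nonempty cut $[X,V(G)\setminus X]$ of a connected graph is minimal if and only if $G[X]$ and $G[V(G)\setminus X]$ are both connected, proved by the same sub-cut argument.
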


\begin{lemma}[\cite{MR2365056}]\label{lem:minimal}
Let $G$ be a graph. If $F$ is a minimum restricted  edge-cut of $G$, then $F$ is a minimal edge-cut of $G$.
\end{lemma}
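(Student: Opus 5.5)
The statement is the classical fact that a minimum restricted edge-cut is minimal. I will argue by contradiction, using Lemma~\ref{lem:twocompnents} to control the components.

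Let $F$ be a minimum restricted edge-cut of a connected graph $G$ with $|F|=\xi(G)$. Suppose $F$ is not a minimal edge-cut, so some proper subset $F'\subsetneq F$ is already an edge-cut. Choose $F'$ to be a minimal edge-cut contained in $F$. By Lemma~\ref{lem:twocompnents}, $G-F'$ has exactly two components, say with vertex sets $X$ and $\bar X=V(G)\setminus X$, and $F'=[X,\bar X]$. Each component of $G-F$ lies inside $X$ or inside $\bar X$, since $G-F\subseteq G-F'$. Every component of $G-F$ has at least two vertices, so $|X|\ge 2$ and $|\bar X|\ge 2$.

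Next I check that $F'$ is itself a restricted edge-cut. The components of $G-F'$ are $G[X]$ and $G[\bar X]$, both connected with at least two vertices. Hence $F'$ is a restricted edge-cut with $|F'|<|F|=\xi(G)$, contradicting the minimality of $\xi(G)$. Therefore $F$ is minimal.

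The only delicate point is the existence of a minimal edge-cut inside $F$, and I expect no real obstacle there. Since $F$ is a finite edge-cut, an inclusion-minimal edge-cut $F'\subseteq F$ exists. If $F$ is not minimal, this $F'$ is proper.

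One more detail must be checked: $G[X]$ and $G[\bar X]$ are exactly the two components of $G-F'$. This holds because $F'$ is a minimal edge-cut. Each of the two components of $G-F'$ is therefore a connected induced subgraph, and each contains at least one full component of $G-F$, so each has at least two vertices.
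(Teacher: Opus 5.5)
The paper does not prove this lemma; it imports it from the literature, so there is no in-paper argument to compare against. Your proof is correct and self-contained.

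You take an inclusion-minimal edge-cut $F'\subsetneq F$. This $F'$ is a minimal edge-cut of $G$ outright, because any edge-cut properly inside $F'$ would also lie inside $F$. The key observation is that $G-F$ is a spanning subgraph of $G-F'$. Hence each of the two sides $X,\bar X$ provided by Lemma~\ref{lem:twocompnents} is a nonempty union of components of $G-F$, and so has at least two vertices. Therefore $F'$ is a restricted edge-cut smaller than $F$, which is the desired contradiction.

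A common alternative argues with $F$ directly. It shows that $G-F$ has exactly two components and that every edge of $F$ joins them, i.e.\ $F=[X,Y]$ with $G[X]$ and $G[Y]$ connected. Your route avoids that analysis. It also proves the slightly stronger fact that every minimal edge-cut contained in a restricted edge-cut is itself restricted.

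Two small remarks:
\begin{enumerate}
\item Your assumption that $G$ is connected matches the paper's convention. Edge-cuts are only defined there for connected graphs, and Lemma~\ref{lem:twocompnents} needs connectivity.
\item The identification $F'=[X,\bar X]$ is not needed for the conclusion. Only the sizes of the vertex sets of the components of $G-F'$ matter.
\end{enumerate}
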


We shall also use the classical extremal bound for $1$-planar graphs.
\begin{lemma}[\cite{MR2297168,MR2746706}]\label{lem:4n-8}
Every 1-planar graph with $n\ge 3$ vertices has at most $4n-8$ edges. 
\end{lemma}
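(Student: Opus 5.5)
Since Lemma~\ref{lem:4n-8} is a classical bound, I would give the standard planarization-and-Euler argument. Let $G$ be a $1$-planar graph with $n\ge 3$ vertices, $m$ edges and a fixed $1$-plane drawing. First I would normalize. Adding edges only increases $m$, so I may assume $G$ is edge-maximal among $1$-planar graphs on its vertex set. I may also assume that the drawing has the minimum number of crossings among $1$-plane drawings of $G$. Let $c$ be the number of crossings. Each crossing involves two edges, and each edge is crossed at most once, so exactly $2c$ edges are crossed.

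\textbf{Kite structure.} The step I expect to need the most care is the kite property. For a crossing of $ab$ and $cd$, the four endpoints $a,b,c,d$ are distinct, since adjacent crossing edges could be uncrossed, contradicting crossing-minimality. I would show that the four ``kite'' edges $ac, cb, bd, da$ are all present and can be drawn uncrossed, closely following the two halves of the crossing edges. If such an edge is missing, maximality lets me add it. If it exists but is drawn elsewhere, I redraw it along the crossing; this creates no new crossing and keeps the drawing $1$-plane. Let $H$ be the plane graph obtained by deleting all $2c$ crossed edges. Then every crossing lies inside a face of $H$ bounded by its $4$-cycle of kite edges, and distinct crossings give distinct such faces. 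By maximality, $H$ is connected, since otherwise an uncrossed edge could be added between components. Also $n\ge 3$ ensures that every face has boundary length at least $3$.

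\textbf{Counting.} Let $H$ have $e_H=m-2c$ edges and $f$ faces. Euler's formula gives $f=e_H-n+2$. Since at least $c$ faces have length at least $4$, summing face lengths gives
\[
2e_H\ \ge\ 4c+3(f-c)\ =\ c+3(e_H-n+2),
\]
hence $e_H\le 3n-6-c$. On the other hand, $c\le f=e_H-n+2\le 2n-4-c$, which yields $c\le n-2$. Therefore
\[
m=e_H+2c\le 3n-6+c\le 4n-8,
\]
as required.
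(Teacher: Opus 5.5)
The paper does not prove this lemma; it quotes it from the literature. Your argument therefore has to stand on its own, and it has a genuine gap at the step you flagged.

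\textbf{Where it fails.} The preliminary facts are fine: the four endpoints of a crossing are distinct, and in a crossing-minimal drawing of an edge-maximal graph the kite edges exist and are uncrossed. The problem is that ``redraw it along the crossing'' is done one crossing at a time. A single edge $ac$ can be a kite edge of several crossings whose $(a,c)$-sectors lie in different places, so moving $ac$ to serve one crossing can take it away from another. In fact the claims that every crossing lies in its own face of $H$ bounded by its kite, that at least $c$ faces have length at least $4$, and that $c\le f$ are false in general.

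For a counterexample, identify an edge $ac$ of one copy of $K_{2,2,2,2}$ (the optimal $1$-planar graph on $8$ vertices) with an edge of a second copy. The result has $n=14$ and $m=47$. It is edge-maximal $1$-planar, by the $4n-8$ bound and Lemma~\ref{lem:proper}:
\begin{itemize}
\item adding an edge inside a copy gives $25$ edges on $8$ vertices;
\item adding an edge between the copies gives a $1$-planar graph with $4n-8$ edges containing an optimal proper subgraph.
\end{itemize}
In every $1$-plane drawing, each copy needs at least $24-(3\cdot 8-6)=6$ crossings among its own edges, so $c\ge 12$. Then $e_H=m-2c\le 23$ and $f=e_H-n+2\le 11<c$. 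In the natural drawing, the shared edge $ac$ is a kite edge of four crossings but has only two sides. So no redrawing can make your inequalities valid for this graph.

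\textbf{A simpler repair.} The lemma is of course true, and the easiest fix drops maximality and kites altogether. Keep only a crossing-minimal $1$-plane drawing, in which crossing edges are non-adjacent, as you argued.
\begin{itemize}
\item Deleting one edge from each crossing pair leaves a plane simple graph, so $m-c\le 3n-6$.
\item For $c\ge 1$, take as vertices the $c$ crossing points and the endpoints of crossed edges, and as edges the $4c$ half-edges. This plane graph is simple (by non-adjacency) and bipartite, with at most $n+c$ vertices. Hence $4c\le 2(n+c)-4$, that is, $c\le n-2$.
\end{itemize}
Together these give $m\le 3n-6+c\le 4n-8$.
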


The following three lemmas concern optimal 1-planar graphs.
\begin{lemma}[\cite{MR3741533}]\label{lem:op_degree}
The minimum degree of every optimal 1-planar graph is six.
\end{lemma}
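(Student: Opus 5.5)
The statement has two parts: every vertex has degree at least six, and some vertex has degree exactly six. I would handle them separately, using the standard structure of optimal $1$-plane graphs. An optimal $1$-plane graph $G$ arises from a $3$-connected plane quadrangulation $Q$ on the same vertex set by adding both diagonals inside every face; $Q$ is the planar skeleton. I would take this as known from the cited literature. It can also be recovered from the extremal count: every crossing pair of edges must span a $K_4$ whose uncrossed edges bound a quadrilateral, otherwise an edge could be added, contradicting Lemma~\ref{lem:4n-8}.

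Since $Q$ is a quadrangulation on $n$ vertices, Euler's formula gives $|E(Q)|=2n-4$ and $n-2$ faces. Each face contributes two crossing edges, and the total is $(2n-4)+2(n-2)=4n-8$, as it should be. For a vertex $v$ we have $d_G(v)=2\,d_Q(v)$: each edge of $Q$ at $v$ is one edge of $G$, and each of the $d_Q(v)$ faces of $Q$ around $v$ adds exactly one diagonal at $v$. These diagonals are distinct because $G$ is simple and $Q$ is $3$-connected, so a vertex cannot meet the same face twice. Hence all degrees in $G$ are even.

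For the lower bound I would show $d_Q(v)\ge 3$. This holds because $Q$ is $3$-connected, or more directly because a degree-$2$ vertex $v$ with neighbours $a,b$ lies in exactly two faces $vabx$ and $vbay$. Both faces then contribute the diagonal $ab$, which would create a multiple edge in the simple graph $G$. Thus $d_G(v)\ge 6$.

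For the existence of a degree-$6$ vertex, the sum of degrees in $Q$ is $2|E(Q)|=4n-8<4n$, so some vertex has $d_Q(v)\le 3$. Combined with $d_Q(v)\ge 3$, this gives $d_Q(v)=3$ and therefore $d_G(v)=6$.

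The main obstacle is justifying the quadrangulation-skeleton structure, in particular that every face of $Q$ yields both diagonals without multiplicities. If I prefer to avoid it, the existence part can also be argued directly: $\sum d_G(v)=8n-16<8n$, so some vertex has $d_G(v)\le 7$. The lower bound $\delta\ge 6$ would then need an argument that deleting a vertex of degree at most $5$ leaves a $1$-planar graph with more than $4(n-1)-8$ edges, contradicting Lemma~\ref{lem:4n-8}. That argument settles the lower bound, but ruling out degree $7$ still requires the parity coming from the skeleton, so I would rely on the skeleton.
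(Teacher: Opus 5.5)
Your main argument is correct. The paper itself gives no proof of this lemma; it only quotes it from \cite{MR3741533}.

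You instead derive it directly from the structure theorem the paper records as Lemma~\ref{lem:qad}, in three steps. First, $d_G(v)=2d_Q(v)$: there is one skeleton edge and one diagonal per corner at $v$, and these are all distinct because $G$ is simple. Second, $\delta(Q)\ge 3$ follows from $3$-connectivity. Third, the degree sum $2|E(Q)|=4n-8<4n$ forces a vertex with $d_Q(v)=3$. This is the standard derivation. It also yields more than the bare citation, namely that every degree in $G$ is even. Your aside that Lemma~\ref{lem:qad} can be recovered from the edge count is only a sketch, but you rightly take the lemma as given.

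There are two slips outside the line of argument you actually rely on.

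In the degree-$2$ argument, the two faces at $v$ should be written $vaxb$ and $vbya$ in cyclic order. As you wrote them ($vabx$, $vbay$), $ab$ would be a boundary edge rather than a diagonal.

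More seriously, the arithmetic in your fallback paragraph is wrong. Deleting a vertex of degree $d$ leaves $4n-8-d$ edges. This exceeds $4(n-1)-8=4n-12$ only when $d\le 3$, so that counting argument yields only $\delta\ge 4$, not $\delta\ge 6$. The sentence ``That argument settles the lower bound'' should be dropped. The skeleton is needed for the lower bound just as much as for excluding degree $7$.
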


\begin{lemma}[\cite{MR2746706}]\label{lem:qad}
Let $G$ be an optimal 1-plane graph. Then $G$ is obtained by inserting
a pair of crossing edges to each quadrilateral face of a 3-connected quadrangulation. 
\end{lemma}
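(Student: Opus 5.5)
This is Suzuki's structural theorem, and I would prove it by a counting argument on the planarization. Fix an optimal 1-plane drawing of $G$ with $n$ vertices and $4n-8$ edges, and let $c$ be the number of crossings.

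\emph{Step 1 (counting).} First I may assume the drawing has the minimum number of crossings among 1-planar drawings of $G$. Then no two edges sharing an endpoint cross, since such a crossing can be removed by swapping the arcs near the crossing. I choose a maximal plane subgraph $H$ of $G$ with $V(H)=V(G)$ by deleting exactly one edge from each crossing pair. Then $|E(H)|=4n-8-c$, and $H$ is a plane graph, so $|E(H)|\le 3n-6$, which gives $c\ge n-2$. On the other hand, each crossing pair $ab$, $cd$ spans four distinct vertices $a,c,b,d$. Adding the four ``kite'' edges $ac,cb,bd,da$ around the crossing, if they are not already present, does not create a new crossing and keeps the graph simple. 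By optimality, all four kite edges must therefore already be present in $G$, and they can be routed uncrossed next to the crossing. After redrawing, let $Q$ be the plane graph of all uncrossed edges. Then $|E(Q)|=4n-8-2c$. Each crossing occupies a region of $Q$ bounded by a 4-cycle, and these regions are distinct. The Euler bound for $Q$ then gives $f(Q)\ge c$, and the total face-length count gives $2|E(Q)|\ge 4c+3(f(Q)-c)$. Combining these with Euler's formula $n-|E(Q)|+f(Q)=2$ yields $c\le n-2$. Hence $c=n-2$, and equality holds throughout. So every face of $Q$ is a 4-face containing exactly one crossing pair, $|E(Q)|=2n-4$, and $Q$ is a quadrangulation. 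I expect the main obstacle to be this equality-case bookkeeping, including the need to ensure the faces of $Q$ are genuine 4-cycles (2-connectedness of $Q$). For that I would use the fact that $G$ is obtained from $Q$ by adding both diagonals in each face, so a cut vertex of $Q$ would give a face whose boundary walk repeats a vertex, and then a diagonal would be a loop or a multiple edge.

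\emph{Step 2 (3-connectivity).} It remains to show that $Q$ is 3-connected. Suppose instead that $\{u,v\}$ is a 2-cut of $Q$. In a quadrangulation this forces a separating 4-cycle or a face in which $u$ and $v$ are opposite. Adding the two diagonals of every face then makes $\{u,v\}$ a separator of $G$ as well, or produces the same diagonal $uv$ in two different faces, which would be a multiple edge. The second outcome contradicts simplicity. The first contradicts the fact that $G$ is 4-connected, as shown by Suzuki; to avoid circularity I would instead derive a contradiction from the minimum degree being at least $6$ (Lemma~\ref{lem:op_degree}) together with a local edge count on the smaller side. Concretely, if $S$ is the smaller side of the separation, then $S\cup\{u,v\}$ induces a 1-planar graph with more than $4|S\cup\{u,v\}|-8$ edges, contradicting Lemma~\ref{lem:4n-8}.
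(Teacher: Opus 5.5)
The paper gives no proof of this lemma; it is quoted from Suzuki, so your argument has to stand on its own. Its overall shape is the standard one: pin down $c=n-2$ by double counting, read the quadrangulation off the equality case, and get $3$-connectivity from simplicity. However, Step~1 has a genuine gap. You change the drawing twice, first to a crossing-minimal one and then by rerouting kite edges. At best this shows that \emph{some} $1$-planar drawing of $G$ has the required form. The lemma is about the given $1$-plane graph, and the paper uses it that way: $Q(G)$ is the set of non-crossing edges of the given drawing, and the proof of Lemma~\ref{lem:proper} works with an arbitrary fixed drawing. Moreover, the rerouting is not justified simultaneously. One edge $ac$ can be a kite edge of two crossings $p$ and $q$, and moving it next to $p$ may move it away from $q$. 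So the claim that the $c$ crossings occupy $c$ distinct $4$-faces of $Q$, and hence $f(Q)\ge c$, is unproved.

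Both problems disappear if you count in the planarization $D^{\times}$ of the given drawing. Assume first that adjacent edges do not cross. Then $D^{\times}$ is a connected simple plane graph on $n+c$ vertices, so every face has length $\ell\ge 3$. Since each edge is crossed at most once, no two crossing points are adjacent in $D^{\times}$. Hence a face of length $\ell$ contains at most $\lfloor \ell/2\rfloor\le \ell-2$ of the $4c$ corners at crossing points. Summing over faces and using Euler's formula gives $4c\le 2|E(D^{\times})|-2|F(D^{\times})|=2(n+c)-4$, that is, $c\le n-2$. Combined with your bound $c\ge n-2$, $D^{\times}$ has $4n-8+2c=3|V(D^{\times})|-6$ edges. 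So it is a triangulation in which every face has exactly one crossing point. The four triangles around the crossing of $ab$ and $cd$ then exhibit $ac,cb,bd,da$ as uncrossed edges bounding a face of $Q$, and these kite regions are all the faces of $Q$. They are bounded by genuine $4$-cycles because $a,b,c,d$ are distinct, so no redrawing and no cut-vertex discussion is needed. If adjacent edges may cross, the $k$ such crossings create at most $k$ digon faces, each with one crossing corner, so the same count gives $c\le n-2+k/2$. Your uncrossing argument gives $c-k\ge n-2$, hence $k=0$.

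Step~2 also needs repair, because your dichotomy is not the right one. If $\{u,v\}$ is a $2$-cut of the $2$-connected plane graph $Q$, there is a closed curve meeting $Q$ only in $u$ and $v$, passing through two distinct faces, with vertices of $Q$ on both sides. Since $Q$ is bipartite, there are two possibilities. If $u$ and $v$ are consecutive on both faces, these are the two faces at the edge $uv$, and one side of the curve contains no vertex. Otherwise they are opposite on both faces, and then $G$ contains two distinct edges $uv$. So simplicity alone yields $3$-connectivity. Your first branch never occurs: $\{u,v\}$ does not separate $G$, since the diagonal $xy$ of a face $uxvy$ traversed by the curve joins its two sides. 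Your proposed count, that $S\cup\{u,v\}$ spans more than $4|S\cup\{u,v\}|-8$ edges, is unsupported and unnecessary.
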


A subgraph $H$ of a graph $G$ is called a \emph{proper subgraph} of $G$
if either $V(H)$ is a proper subset of $V(G)$ or $E(H)$ is a proper
subset of $E(G)$.
The following lemma shows that the class of optimal $1$-planar graphs is not closed under taking subgraphs.
\begin{lemma}\label{lem:proper}
Any proper subgraph of an optimal $1$-planar graph $G$ is  not optimal $1$-planar.
\end{lemma}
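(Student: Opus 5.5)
Let $H$ be a proper subgraph of $G$. I will show that $H$ has too few edges to be optimal $1$-planar, comparing edge counts through Lemma~\ref{lem:4n-8}. Write $n=|V(G)|$, so $|E(G)|=4n-8$, and put $n'=|V(H)|$. If $n'\le 2$, or more generally $n'\in\{1,\dots,7\}\cup\{9\}$, then $H$ cannot be optimal, since optimal $1$-planar graphs exist only for $n=8$ and $n\ge 10$; and for $n'\le 2$ the notion does not even apply. So we may assume $n'\ge 3$, and we need to show $|E(H)|<4n'-8$. There are two cases.

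\emph{Case 1: $V(H)=V(G)$.} Then $E(H)\subsetneq E(G)$, so $|E(H)|\le 4n-9<4n'-8$, and $H$ is not optimal.

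\emph{Case 2: $V(H)\subsetneq V(G)$.} Let $S=V(G)\setminus V(H)\neq\emptyset$ and $k=|S|$. Then
\[
|E(H)|\le |E(G[V(H)])| = |E(G)| - |E(G)\cap \text{edges incident with } S|.
\]
It therefore suffices to show that more than $4k$ edges of $G$ are incident with $S$, since then $|E(H)|<4n-8-4k=4n'-8$.

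Count these edges as $\sum_{v\in S} d_G(v) - |E(G[S])|$. By Lemma~\ref{lem:op_degree}, every degree is at least $6$, so the sum is at least $6k$. For $|E(G[S])|$:
\begin{itemize}
\item if $|S|\ge 3$, Lemma~\ref{lem:4n-8} gives $|E(G[S])|\le 4k-8$, because $G[S]$ is $1$-planar;
\item if $k=1$, then $|E(G[S])|=0$;
\item if $k=2$, then $|E(G[S])|\le 1$.
\end{itemize}
Hence the number of edges incident with $S$ is at least:
\begin{itemize}
\item $6>4$ when $k=1$;
\item $12-1=11>8$ when $k=2$;
\item $6k-(4k-8)=2k+8>4k$ when $3\le k<4$, i.e.\ $k=3$.
\end{itemize}

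For larger $k$ this degree bound is too weak, so count from the other side instead. Edges incident with $S$ are exactly $E(G)\setminus E(G[V(H)])$. Since $n'\ge 3$, Lemma~\ref{lem:4n-8} applied to $G[V(H)]$ gives $|E(G[V(H)])|\le 4n'-8$. Optimality of $H$ would force $E(H)=E(G[V(H)])$ with exactly $4n'-8$ edges. Then the edges incident with $S$ number $(4n-8)-(4n'-8)=4k$. Now double count:
\[
\sum_{v\in S}d_G(v)=|[S,V(H)]|+2|E(G[S])| = 4k+|E(G[S])| .
\]
The left side is at least $6k$, so $|E(G[S])|\ge 2k$. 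That alone is not a contradiction, so a sharper argument is needed.

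The key step, and the main obstacle, is the case $k\ge 4$. The best approach is to use the structure from Lemma~\ref{lem:qad}. Fix an optimal $1$-planar drawing of $G$ and restrict it to $H=G[V(H)]$. If $H$ is optimal, then $H$ is itself (in some drawing) a quadrangulation with crossing pairs. However, the plan is simpler with a face-counting argument.

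In the drawing of $G$, remove the vertices of $S$. Some region of the plane, namely the union of the kites of the quadrangulation $Q$ touching $S$, becomes a face of the induced drawing of $H$. Planarizing the restricted drawing of $H$ by its crossings, the bound $4n'-8$ is attained only when every face of the planarization is a triangle arising from a kite. The face created by deleting $S$ has length at least $4$ and contains no crossing pair. More precisely, its boundary in $Q$ is a closed walk with at least $4$ vertices, because $Q$ is bipartite and $3$-connected. This yields a strictly lost edge relative to the extremal count, so $|E(H)|\le 4n'-9$.

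The details to verify are that the induced drawing of $H$ is still $1$-planar, and that a non-kite face of size at least $4$ reduces the bound $4n'-8$ by at least one. The latter follows from the standard proof of Lemma~\ref{lem:4n-8}: remove one edge from each crossing pair to get a plane graph, and note that having a face that is not a triangle costs at least one edge. Verifying these is where I expect the most care is needed.
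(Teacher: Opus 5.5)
\textbf{There is a genuine gap in your Case~2 for $k\ge 4$.} Case~1 and your degree counts for $k\le 3$ are correct, but the $k\ge 4$ step carries the whole difficulty.

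The union of the kites of $Q$ meeting $S$ is not a face of the restricted drawing $D(H)$. In every such kite, the diagonal not incident with $S$ survives (now uncrossed), and so do all $Q$-edges between vertices of $V(H)$. These subdivide the region. Concretely, suppose $s\in S$ has $d_G(s)=6$, so that $s$ has $Q$-neighbours $a_1,a_2,a_3$, and suppose $a_1,a_2,a_3\in V(H)$. Then the face of $D(H)$ containing $s$ is the triangle $a_1a_2a_3$ bounded by the three surviving diagonals, whatever the rest of $S$ is. So your claim that the face ``created by deleting $S$'' has length at least $4$ is false in general. Bipartiteness and $3$-connectivity of $Q$ only describe the boundary of the union of kites, not a face of $D(H)$.

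This is not cosmetic, because your counting mechanism depends on that length. Take $H=G-s$ with $d_G(s)=6$. Then $|E(H)|=4n'-10$ and $D(H)$ has only $n'-4$ crossings. Deleting one edge from each crossing pair leaves $3n'-6$ edges, i.e.\ a triangulation with no non-triangular face. The deficit sits entirely in the number of crossings, which your sketch never tracks.

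\textbf{What you actually need is weaker.} It suffices that the face $\Gamma$ of the planarization of $D(H)$ containing some $s\in S$ is not a triangle with two true vertices and one false vertex. If $H$ were optimal, Lemma~\ref{lem:qad} applied to the drawing $D(H)$ would force every face to have that form, giving the contradiction.

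The paper obtains this from $4$-connectivity. Suppose $\Gamma$ had true vertices $a,b$. The two boundary segments at the false vertex belong to edges that are already crossed, and $ab$ can be crossed at most once. So at most three internally disjoint paths join $s$ to any $w\in V(H)\setminus\{a,b\}$, contradicting Menger's theorem.

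Alternatively, since optimality forces $H=G[V(H)]$, every crossing of $D(H)$ lies inside a face of $Q$ whose four boundary edges all belong to $H$. The vertex $s$ lies outside that face, so $\Gamma$ has no false vertex on its boundary at all.

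Either argument closes the proof, and it also makes your separate degree counts for $k\le 3$ unnecessary.
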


\begin{proof}
Let $H$ be a proper subgraph of $G$, and suppose that $H$ is an optimal
$1$-planar graph. Then
$
|V(H)|\ge 8
$
by~\cite{MR2746706}. We first observe that $H$ is an induced subgraph of
$G$. Indeed, otherwise there would exist an edge of $G$ joining two
vertices of $H$ but not belonging to $H$. Adding this edge to $H$ would
produce a $1$-planar graph on $|V(H)|$ vertices with
$
4|V(H)|-7
$
edges, contradicting Lemma~\ref{lem:4n-8}. 

We next prove that $H$ is in fact the graph $G$. Since $H$ is induced, it suffices to prove that $V(G)\setminus V(H)=\emptyset$.
Fix a $1$-planar drawing $D(G)$ of $G$, and let $D(H)$ be the drawing
of $H$ obtained by restricting $D(G)$ to $H$. Let $D^{\times}(H)$ be
the planarization of $D(H)$, obtained by replacing each crossing with a new
vertex of degree $4$. The vertices inherited from $H$ are called
\emph{true vertices}, whereas the vertices corresponding to crossings
are called \emph{false vertices}.
Recall that every optimal $1$-planar graph is $4$-connected
\cite{MR2746706}, and that by Lemma \ref{lem:qad} every face of $D^{\times}$ is triangular, with
two true vertices and one false vertex on its boundary. 

\begin{claim}\label{claim:vh}
No
vertex of $V(G)\setminus V(H)$ lies in the interior of a face of
$D^{\times}(H)$.
\end{claim}
\begin{proof}
Suppose, to the contrary, that some vertex
$v\in V(G)\setminus V(H)$ lies in the interior of a face
$\Gamma$ of $D^{\times}(H)$. Let $a$ and $b$ be the two true
vertices on the boundary of $\Gamma$. Since $|V(H)|\ge 8$, choose
$w\in V(H)\setminus\{a,b\}$; then $w$ lies outside $\Gamma$.

The two boundary edges of $\Gamma$ incident with its false vertex
are segments of edges that have already been crossed, and hence
cannot be crossed again. Therefore, every path between  $v$ and $w$ must pass
through $a$ or $b$, or cross the remaining boundary edge $ab$.
Since the paths are internally vertex-disjoint and $ab$ can be
crossed at most once, there can be at most three internally
vertex-disjoint paths between $v$ and $w$. This contradicts Menger's theorem,
as $G$ is $4$-connected.
\end{proof}

By Claim \ref{claim:vh}, it follows that $V(G)\setminus V(H)=\varnothing$, and hence
$V(G)=V(H)$. Combining this with the fact that $H$ is an induced subgraph of $G$, we obtain
$H=G$, contradicting the assumption that $H$ is a proper subgraph of
$G$.
\end{proof}
\section{Proof of Theorem \ref{thm:main}}\label{sec:thm1}

%

By Theorem~\ref{lem:xiG}, 
it is enough to prove that $\xi(G)\neq 8$.
Suppose, to the contrary, that
$
    \xi(G)=8.
$
Let $F$ be a minimum restricted edge-cut of $G$. Thus, 
$
    |F|=8.
$
By Lemma \ref{lem:minimal}, $F$ is a minimal
edge-cut of $G$. By Lemma~\ref{lem:twocompnents}, $G-F$ has exactly two components,
whose vertex sets are denoted by $X$ and $Y:=V(G)\setminus X$.
Then
$
    F=[X,Y],
$
and both $G[X]$ and $G[Y]$ are connected.


\begin{claim}\label{claim:1}
$
    |X|\ge 3
$ and 
$
    |Y|\ge 3.
$
\end{claim}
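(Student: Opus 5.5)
By symmetry it suffices to show $|X|\ge 3$. Since $F$ is a restricted edge-cut, every component of $G-F$ has at least two vertices, so $|X|\ge 2$. The plan is therefore to rule out $|X|=2$ by counting the edges leaving $X$, using only the minimum degree bound of Lemma~\ref{lem:op_degree} and the assumption $|F|=8$.

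Suppose $|X|=2$, say $X=\{u,v\}$. As $G[X]$ is connected, $uv\in E(G)$. The edges of $F=[X,Y]$ are exactly the edges incident with $u$ or $v$ other than $uv$. There are no multiple edges and $uv$ is the only edge inside $X$, so
\[
|F| = (d_G(u)-1)+(d_G(v)-1) = d_G(u)+d_G(v)-2.
\]
By Lemma~\ref{lem:op_degree}, $d_G(u),d_G(v)\ge 6$, hence $|F|\ge 10$. This contradicts $|F|=8$, so $|X|\ge 3$. The same argument with the roles of $X$ and $Y$ exchanged gives $|Y|\ge 3$.

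Every step is a direct count, so I do not expect a real obstacle. The only points needing care are that $G[X]$ is connected, which follows from the choice of $X$ and $Y$ as the vertex sets of the two components of $G-F$, and that each edge from $u$ or $v$ to $Y$ is counted exactly once.
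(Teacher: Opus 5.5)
Your proof is correct and follows the paper's argument. You rule out $|X|=2$ by noting $uv\in E(G)$, so $|F|=d_G(u)+d_G(v)-2\ge 10$ by Lemma~\ref{lem:op_degree}, which contradicts $|F|=8$; the case of $Y$ follows by symmetry.
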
 
\begin{proof}
Since $F$ is a restricted edge-cut,  we have
$
    |X|\ge 2
$
and 
$
    |Y|\ge 2.
$
Suppose, to the contrary, that \(|X|=2\), where \(X=\{u,v\}\).
Since $G[X]$ is
connected, we have $uv\in E(G)$. By Lemma \ref{lem:op_degree},
$    d_G(u)\ge 6
$ and 
$
    d_G(v)\ge 6.
$
Using
$
    |[X,Y]|
    =
    \sum_{x\in X} d_G(x)-2|E(G[X])|,
$
we obtain
$
    |F|
    =
    |[\{u,v\},Y]|
    =
    d_G(u)+d_G(v)-2
    \ge 6+6-2
    =
    10,
$
which contradicts $|F|=8$. Therefore $|X|\ge 3$. By the symmetry of $X$ and $Y$, $|Y|\ge 3$. Therefore, the claim holds.
\end{proof}

\begin{claim}\label{claim:opxy}
$G[X]$ and $G[Y]$ are optimal $1$-planar graphs.
\end{claim}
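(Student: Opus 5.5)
We argue by edge counting, using the degree-sum identity from the proof of Claim~\ref{claim:1} together with the extremal bound of Lemma~\ref{lem:4n-8}. Set $n_X=|X|$ and $n_Y=|Y|$, so $n=n_X+n_Y$ and $|E(G)|=4n-8$. Every edge of $G$ lies in $G[X]$, in $G[Y]$, or in $F=[X,Y]$. Hence
\[
4n-8 \;=\; |E(G[X])|+|E(G[Y])|+8 .
\]
By Claim~\ref{claim:1} we have $n_X\ge 3$ and $n_Y\ge 3$. Each of $G[X]$ and $G[Y]$ is a subgraph of a $1$-planar graph, hence $1$-planar. So Lemma~\ref{lem:4n-8} gives $|E(G[X])|\le 4n_X-8$ and $|E(G[Y])|\le 4n_Y-8$.

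Adding these two bounds yields
\[
|E(G[X])|+|E(G[Y])| \le 4n-16 .
\]
On the other hand, the identity above says the left side equals exactly $4n-16$. So equality must hold in both bounds, that is, $|E(G[X])|=4n_X-8$ and $|E(G[Y])|=4n_Y-8$. Since both graphs have at least three vertices, this is precisely the statement that $G[X]$ and $G[Y]$ are optimal $1$-planar graphs.

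The only place needing care is the applicability of Lemma~\ref{lem:4n-8}, which requires at least three vertices. This is exactly why Claim~\ref{claim:1} was established first. Without it, a two-vertex side would have one edge rather than $4\cdot 2-8=0$, and the count would break down. No other obstacle arises: the argument is a pure counting squeeze, and the structural consequences (to be exploited afterwards, e.g. via Lemma~\ref{lem:proper}) are not needed for this claim itself.
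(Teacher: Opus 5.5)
Your proposal is correct and follows essentially the same route as the paper. The paper also writes $|E(G[X])|+|E(G[Y])|=(4|X|-8)+(4|Y|-8)$ from $|F|=8$, then squeezes this against the bound of Lemma~\ref{lem:4n-8}, which applies because Claim~\ref{claim:1} gives $|X|,|Y|\ge 3$.
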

\begin{proof}

Since $|V(G)|=|X|+|Y|$ and $G$ is optimal $1$-planar,
$
    |E(G)|=4(|X|+|Y|)-8.
$
Since $F=[X,Y]$ and $|F|=8$, we have
\[
\begin{aligned}
    |E(G[X])|+|E(G[Y])|
    &=
    |E(G)|-|F|  \\
    &=
    4(|X|+|Y|)-8-8 \\
    &=
    (4|X|-8)+(4|Y|-8).
\end{aligned}
\]
On the other hand,  $G[X]$ and $G[Y]$ are $1$-planar graphs. By Claim \ref{claim:1}, $|V(G[X])|\ge 3$ and $|V(G[Y])|\ge 3$. By Lemma \ref{lem:4n-8}
$
   |E(G[X])|\le 4|X|-8
$ and
$  |E(G[Y])|\le 4|Y|-8.
$
Thus $ |E(G[X])|=4|X|-8$ and $|E(G[Y])|=  4|Y|-8$, as desired.
\end{proof}
We now complete the proof. Claim~\ref{claim:opxy} implies that $G[X]$ is
optimal $1$-planar; however, since $|Y|\ge 3$, $G[X]$ is a proper
subgraph of $G$, contradicting Lemma~\ref{lem:proper}. Hence
$\xi(G)\neq 8$, and therefore $\xi(G)\in\{10,12\}$, as desired.

%
%

\section{Subgraphs of optimal 1-planar graphs}\label{sec:near}  

To prove Theorem \ref{thm:six-connected-characterization}, we study a special class of  subgraphs
of optimal $1$-planar graphs. We call a $1$-planar
graph on $n$ vertices \emph{almost optimal} if it has $4n-9$ edges.  Clearly, deleting an edge from an optimal $1$-planar graph yields an almost  optimal $1$-planar graph. But not every almost optimal $1$-planar graph is a subgraph of an
optimal $1$-planar graph; for example, $K_6$ is not.

We first establish a simple restriction on the number of vertices of an almost optimal 1-planar graph. 
 
\begin{observation}\label{ob:1}
Let $G$ be an almost optimal $1$-planar graph. Then either
$G$ is a $3$-cycle or $|V(G)|\ge 6$.
\end{observation}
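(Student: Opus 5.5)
We argue by a direct case analysis on $n:=|V(G)|$, comparing the required edge count $4n-9$ with the trivial upper bound $\binom{n}{2}$ valid for any simple graph. The only input is that $|E(G)|=4n-9$ must be a nonnegative integer not exceeding $\binom{n}{2}$. We do not even need Lemma~\ref{lem:4n-8}, since we only have to rule out small orders.

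The cases are as follows.
\begin{itemize}
\item For $n=1$ and $n=2$, we get $4n-9<0$, which is impossible.
\item For $n=3$, we get $4n-9=3=\binom{3}{2}$, so $G$ is the complete graph on three vertices, that is, a $3$-cycle.
\item For $n=4$, we would need $7>6=\binom{4}{2}$ edges.
\item For $n=5$, we would need $11>10=\binom{5}{2}$ edges.
\end{itemize}
The last two cases are impossible, so every remaining almost optimal $1$-planar graph has $|V(G)|\ge 6$.

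There is no real obstacle. The only point to check is the inequality $4n-9>\binom{n}{2}$ for $n\in\{4,5\}$, and the equality at $n=3$ forcing $K_3$. For completeness, we may remark that $n=6$ is genuinely attained: $K_6$ has $15=4\cdot 6-9$ edges and is $1$-planar. So the bound cannot be improved, although this is not needed for the observation.
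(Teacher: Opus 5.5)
Your proof is correct and follows essentially the same route as the paper: nonnegativity of $4n-9$ forces $n\ge 3$, the bound $4n-9\le\binom{n}{2}$ fails at $n=4,5$, and at $n=3$ equality forces a $3$-cycle. Your remark that $K_6$ attains $n=6$ matches the paper's own mention of $K_6$ as an almost optimal $1$-planar graph.
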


\begin{proof}
Let $n:=|V(G)|$. Since $G$ is almost optimal,
$
    |E(G)|=4n-9.
$
Since $|E(G)|\ge 0$, we have $n\ge 3$. Moreover, as $G$ is simple,
$
    4n-9\le \binom{n}{2}.
$
This inequality fails for $n=4$ and $n=5$. Hence either $n=3$ or
$n\ge 6$. If $n=3$, then $|E(G)|=3$, and therefore $G$ is a
$3$-cycle.
\end{proof}
The \emph{quadrangular subgraph} of an optimal $1$-plane graph $G$,
denoted by $Q(G)$, is the subgraph consisting of all non-crossing edges
of $G$. By Lemma \ref{lem:qad}, $Q(G)$ is a $3$-connected quadrangulation. A \emph{diagonal} of a quadrilateral face is an edge joining two
opposite vertices on its boundary.

\begin{proposition}\label{prop:near-optimal}
Let $G$ be an optimal $1$-plane graph, let $Q:=Q(G)$, and let
$X\subsetneq V(G)$ with $|X|\ge 6$. Then $G[X]$ is almost optimal
if and only if $Q[X]$ satisfies the following properties.
\begin{enumerate}
    \item[(1)] $Q[X]$ is a quadrangulation;
    
    \item[(2)] $Q[X]$ has a unique face $\phi$ that is not a face of
    $Q$, while every other face of $Q[X]$ is a face of $Q$;
    
    \item[(3)] exactly one of the two diagonals of $\phi$ belongs to
    $E(G[X])$.
\end{enumerate}
\end{proposition}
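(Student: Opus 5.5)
We prove both directions by an edge count comparing $G[X]$ with the plane graph $Q[X]$ and the faces of $Q$ it contains. Let $n:=|X|$. Every edge of $G[X]$ is either an edge of $Q[X]$ or a crossing edge of $G$. By Lemma~\ref{lem:qad}, each crossing edge is a diagonal of a unique face of $Q$; both its endvertices lie in $X$.

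For a face $f$ of $Q$, call $f$ \emph{full} if all four of its boundary vertices lie in $X$; then both diagonals of $f$ lie in $G[X]$. Otherwise, at most one diagonal of $f$ has both ends in $X$. Writing $q:=|E(Q[X])|$, $t$ for the number of full faces of $Q$, and $s$ for the number of non-full faces of $Q$ having exactly one diagonal inside $X$, we get
\[
|E(G[X])| = q + 2t + s .
\]
Viewing $Q[X]$ as a plane graph, its faces are of two kinds. The faces of $Q$ that are full are faces of $Q[X]$. Each remaining face of $Q[X]$ is a union of faces of $Q$ together with vertices outside $X$, and it contains every non-full face counted by $s$.

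\textbf{``If'' direction.} Assume (1)--(3). Since $Q[X]$ is a quadrangulation on $n$ vertices, Euler's formula gives $q=2n-4$ and $n-2$ faces. By (2), exactly $n-3$ of these faces are faces of $Q$, and each contributes its two crossing diagonals. The face $\phi$ is not a face of $Q$, so its diagonals are not crossing pairs of $G$ inside $\phi$. Any non-full face of $Q$ with a diagonal in $X$ lies inside $\phi$, and such a diagonal must join two vertices of $\partial\phi$, hence is a diagonal of $\phi$. By (3) there is exactly one such edge. Therefore
\[
|E(G[X])| = (2n-4) + 2(n-3) + 1 = 4n-9,
\]
where one also checks that a boundary vertex pair of $\phi$ cannot be joined twice.

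\textbf{``Only if'' direction.} Assume $|E(G[X])|=4n-9$. We bound $q+2t+s$ from above. For the plane simple graph $Q[X]$, which is bipartite since $Q$ is a bipartite quadrangulation, Euler's formula gives $q \le 2n-4 - \sum_{f}(\ell(f)-4)/2$, summing over faces $f$ of $Q[X]$ of boundary walk length $\ell(f)$. Also $t \le F_4$, the number of faces of $Q[X]$ that are faces of $Q$.

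For each face $\psi$ of $Q[X]$ that is not a face of $Q$, we bound the number of $s$-edges inside it. These edges are chords of $\psi$ that pairwise do not cross, since each is crossed only by an edge leaving $X$ or by its partner. A non-crossing set of chords in a face of length $\ell$ number at most $\ell-3$. Combining these bounds gives
\[
4n-9 = q+2t+s \le 4n-8 - \sum_{\psi}\bigl(\text{a positive deficiency of }\psi\bigr).
\]
So there is exactly one non-$Q$ face. That face must be a 4-face carrying exactly one chord, and all other faces must be quadrilateral faces of $Q$. This yields (1)--(3).

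The main obstacle is making the deficiency bound precise. One must show that a non-$Q$ face $\psi$ of length $\ell$ contributes at least $1$ of deficiency, i.e.\ that its chords number at most $(\ell-4)/2\cdot 2+1$ less than what $\ell$ would permit among full faces. Degenerate boundary walks, for disconnected or non-2-connected $Q[X]$, must also be handled.

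A cleaner alternative is to compare with the count when the interior of $\psi$ is "filled". Triangulate $\psi$'s chords into the 1-planar picture and use Lemma~\ref{lem:4n-8} on $G[X]$ plus added edges. Equality $4n-9$ then leaves room for only one missing edge, and $X\ne V(G)$ together with Lemma~\ref{lem:proper} forces a nonempty non-$Q$ face. Its vertices outside $X$ consume exactly one potential edge, which gives (3).
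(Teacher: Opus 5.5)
Your ``if'' direction is correct and is essentially the paper's count $(2n-4)+2(n-3)+1$. Your remark that a crossing edge of $G[X]$ inside $\phi$ must join opposite corners of $\phi$ spells out something the paper leaves implicit.

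The ``only if'' direction has a genuine gap. The inequality $q+2t+s\le 4n-8-\sum_\psi(\text{deficiency})$, with every deficiency at least $1$, is asserted rather than proved, and you flag it yourself as the main obstacle. The bounds you list do not combine to give it. The bound $t\le F_4$ says nothing in terms of $n$ until the number of faces of $Q[X]$ is tied to $q$, $n$ and the number $k$ of components of $Q[X]$ by Euler's formula. Doing this yields the exact identity
\[
|E(G[X])| = 4n-4-4k-\sum_{\psi}\Bigl(\tfrac{3}{2}\ell_\psi-4-s_\psi\Bigr),
\]
summed over the faces $\psi$ of $Q[X]$ that are not faces of $Q$. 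If every such $\psi$ is bounded by a cycle, your chord bound $s_\psi\le\ell_\psi-3$ makes each summand at least $\ell_\psi/2-1\ge 1$, and the argument would close.

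At this stage, however, nothing excludes $Q[X]$ being disconnected or having faces not bounded by cycles, and then the chord bound fails. Take a face bounded by a $4$-cycle $abcd$ of $Q[X]$ that also contains a vertex $u\in X$ whose $Q$-neighbours all lie outside $X$, with $u$ in the colour class of $a,c$. Nothing in your bounds prevents $ua,uc,ac$ from all being pairwise non-crossing $s$-edges in that face. Then $\ell_\psi=4$ and $s_\psi=3>\ell_\psi-3$, so the summand is $-1$. It is compensated only through the $-4k$ term, and that is exactly the information lost when $q$ is bounded by $2n-4-\sum_f(\ell(f)-4)/2$ using only $k\ge 1$. A correct version would have to charge components to faces and prove a chord bound for faces with several boundary walks; none of this is done. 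Your ``cleaner alternative'' (any defect of $Q[X]$ leaves room for two further edges) restates the claim rather than proving it.

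The paper sidesteps degenerate faces with a different idea. First it bounds the number $c$ of full faces (faces of $Q$ with all four vertices in $X$) from below. Deleting one edge of each crossing pair of $G[X]$ leaves a plane graph, so $c\ge |E(G[X])|-(3n-6)=n-3$. It then works not with $Q[X]$ but with the subgraph $H$ formed by the boundary $4$-cycles of the full faces. Each component $H_i$ is a bipartite plane graph on $v_i\ge 4$ vertices, so it has at most $v_i-2$ faces. By Lemma~\ref{lem:proper}, at least one of these faces is not full. Hence $c\le\sum_i(v_i-3)\le n-3$, with equality only if $H$ is a connected spanning quadrangulation with exactly one non-full face. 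Finally, $Q[X]=H$, because any further $Q$-edge would join opposite corners of a $4$-face, which is impossible in the bipartite $Q$; and (3) then follows from the edge count.
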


\begin{proof}

Put
$
x:=|X| $ and $ m:=|E(G[X])|.
$
Throughout, every subgraph of $G$ is considered with the drawing inherited
from $G$. Let $c$ denote the number of crossings in $G[X]$.

To prove sufficiency, assume that $Q[X]$ satisfies (1)--(3).
By (1), $Q[X]$ is a quadrangulation on $x$ vertices. Hence
$
|E(Q[X])|=2x-4
$
and $Q[X]$ has $x-2$ faces.
By (2), exactly one of these faces, say $\phi$, is not a face of $Q$.
Thus the remaining $x-3$ faces are quadrilateral faces of $Q$. Since
$G$ is optimal $1$-plane, each such face contains both of its diagonal
edges in $G$, and hence contributes two edges to $G[X]$.
Finally, by (3), exactly one diagonal of $\phi$ belongs to $G[X]$.
Therefore,
$
|E(G[X])|
=(2x-4)+2(x-3)+1
=4x-9.
$
Hence $G[X]$ is almost optimal.

It remains to prove the necessity. Assume that $G[X]$ is almost optimal.

\begin{claim}\label{claim:cros}
$c\ge x-3$.
\end{claim}
\begin{proof}

By the standard lower bound on the number of crossings (see \cite[p.~10]{MR3751397}), a drawing of an $x$-vertex graph with $m$ edges and $c$ crossings satisfies $c\ge m-(3x-6).$
 Since $G[X]$ is almost optimal, $m=4x-9$, and hence
$
c\ge x-3 .     
$
\end{proof}

By Lemma~\ref{lem:qad}, each crossing of $G$ corresponds to a unique quadrilateral face of $Q$. Moreover, such a crossing remains in 
$G[X]$ if and only if all four vertices of the corresponding quadrilateral  face of $Q$
belong to $X$. Let $\mathcal F_X$ be the set of all  quadrilateral faces of $Q$ whose
four vertices belong to $X$. Then by the preceding analysis, 
$
c=|\mathcal F_X|.
$ 

We next determine the structure of $Q[X]$.
For each face $\psi\in\mathcal F_X$, let
$C_\psi$ denote its boundary cycle. Define $H$ to be the
plane subgraph of $Q[X]$ given by
\[
V(H)=\bigcup_{\psi\in\mathcal F_X}V(C_\psi)
\qquad\text{and}\qquad
E(H)=\bigcup_{\psi\in\mathcal F_X}E(C_\psi),
\]
with the embedding inherited from $Q$. We first determine
the structure of $H$ and then show that, in fact, $H=Q[X]$.

Since $x\ge 6$, Claim~\ref{claim:cros} yields
$
c\ge x-3\ge 3.
$
Thus $\mathcal F_X\neq\varnothing$, so $H$ contains the boundary of at least one quadrilateral face and hence
$
|V(H)|\ge 4.
$ Furthermore, we have the following claim.

\begin{claim}\label{claim:H-structure}
The subgraph $H$ satisfies the following properties:
\begin{enumerate}
    \item[(i)] $V(H)=X$ and $H$ is connected;
    \item[(ii)] every face of $H$ is  quadrilateral, and exactly one
    face of $H$ is not a member of $\mathcal F_X$.
\end{enumerate}
\end{claim}
\begin{proof} 
We prove (i) and (ii) simultaneously by deriving an upper bound for
$c=|\mathcal F_X|$. The equality conditions in this bound will force $H$ to satisfy properties (i) and (ii).

Since $|V(H)|>0$, let $H_1,\ldots,H_s$ be the components of $H$ where $s\ge 1$. For each $i$, let
$v_i,e_i,f_i$ be the numbers of vertices, edges and faces of $H_i$,
respectively, and let $q_i$ be the number of faces of $H_i$ that belong to
$\mathcal F_X$. Clearly $H_i$ has at least one quadrilateral face belonging to $\mathcal F_X$, and thus $v_i\ge 4$.

We next establish two bounds that will be used in the counting argument.

Since every face of the quadrangulation $Q$ is quadrilateral, it is easy to see that every cycle of $Q$ has even length; hence $Q$ is bipartite.
Thus, each subgraph $H_i$ is a bipartite plane graph on at least four vertices. A straightforward consequence of Euler’s formula is that 
$f_i\le v_i-2,$
with equality precisely when $H_i$ is a quadrangulation.

 We now show that $
q_i\le f_i-1$, that is, each component $H_i$ has at least one face that is
not a member of $\mathcal F_X$. Indeed, if every face of some $H_i$ belonged to $\mathcal F_X$, then
$H_i$ would be a quadrangulation. Hence
$
e_i=2v_i-4
$ and
$f_i=v_i-2.
$
Moreover, each face of $H_i$ is  also a quadrilateral face of $Q$ and
therefore contributes two diagonals to $G[V(H_i)]$. Thus,
\[
\begin{aligned}
|E(G[V(H_i)])|
&= e_i+2f_i\\
&=(2v_i-4)+2(v_i-2)\\
&=4v_i-8.
\end{aligned}
\]
Hence $G[V(H_i)]$ is an optimal $1$-planar graph.
Since
$
|V(H_i)|\le x<|V(G)|,
$
it is a proper subgraph of $G$, contradicting
Lemma~\ref{lem:proper}.

Therefore, we have
\vspace{-8pt}
\begin{align}
c=|\mathcal F_X|
&=\sum_{i=1}^s q_i                                      \notag\\
&\le \sum_{i=1}^s (f_i-1)                                \label{eq:cx-one-extra-face}\\
&\le \sum_{i=1}^s (v_i-3)                                \label{eq:cx-quadrangulation}\\
&=\sum_{i=1}^s v_i-3s                                    \notag\\
&\le x-3s                                                \label{eq:cx-all-vertices-used}\\
&\le x-3 .                                               \label{eq:cx-connected}
\end{align}

Together with $c\ge x-3$, the chain of inequalities above forces equality throughout.

For (i), the equality in \eqref{eq:cx-all-vertices-used} gives
$\sum_{i=1}^s v_i=x$, and hence $V(H)=X$.
 The equality in \eqref{eq:cx-connected} gives $s=1$, so
$H$ is connected. Thus (i) holds.

For (ii), 
the equality in
\eqref{eq:cx-quadrangulation} gives $f_i=v_i-2$ for the unique component, so $H$ is a
quadrangulation. Finally, the equality in \eqref{eq:cx-one-extra-face} gives
$q_i=f_i-1$, which means that exactly one face of $H$ is not a member of
$\mathcal F_X$.  Thus (ii) holds.
\end{proof}

\begin{claim}\label{claimQX}
$
     Q[X]=H.
$
\end{claim}
\begin{proof}
 Clearly, $H\subseteq Q[X]$. Suppose that
$uv\in E(Q[X])\setminus E(H)$. Since $H$ is a spanning plane subgraph of
$Q[X]$, the edge $uv$ lies in a face of $H$. By Claim
\ref{claim:H-structure}(ii), this face is a quadrilateral face. Since
$uv\notin E(H)$, its endpoints are opposite vertices of this cycle. However,
$Q$ is bipartite, so opposite vertices of a $4$-cycle belong to the same
partite set and cannot be adjacent in $Q$, a contradiction. Hence
$
       Q[X]=H.
$
\end{proof} 

We now complete the proof of the necessity by showing that $Q[X]$
satisfies conditions~(1)--(3).

Thus, by Claim~\ref{claim:H-structure}(i) and~ Claim \ref{claimQX},
$Q[X]$ is a quadrangulation, and thus (1) holds. By Claim~\ref{claim:H-structure}(ii), exactly one face of $Q[X]$ does
not belong to $\mathcal F_X$. By the definition of $\mathcal F_X$, a
face of $Q[X]$ belongs to $\mathcal F_X$ if and only if it is also a
face of $Q$. Consequently, $Q[X]$ has exactly one face that is not a
face of $Q$. Thus (2) holds.

     Note that $Q[X]$ has $2x-4$ edges and
$x-2$ faces.   Furthermore,  by Claim \ref{claim:H-structure}(ii), exactly $x-3$ of these faces belong to $\mathcal F_X$, each contributing
two diagonal edges to $G[X]$. Let $\phi$ be the unique exceptional face. Then
$
4x-9=m=(2x-4)+2(x-3)+d_\phi,
$
where $d_\phi$ is the number of diagonals of $\phi$ in $G[X]$.
Thus $d_\phi=1$. Therefore, exactly one diagonal of
$\phi$ belongs to $G[X]$. Thus (3) holds.

This completes the proof of the proposition.
\end{proof}

As an immediate consequence of
Proposition~\ref{prop:near-optimal}, an optimal $1$-plane graph
containing an almost optimal proper induced subgraph has a vertex-cut
of size~$4$, as stated in the following corollary.

\begin{corollary}\label{cor:four-cut-from-deficient-subgraph}
Let $G$ be an optimal $1$-plane graph, and let $Q:=Q(G)$.
If $X\subsetneq V(G)$, $|X|\ge 6$, and $G[X]$ is almost optimal,
then
$
\kappa(G)=4.
$
\end{corollary}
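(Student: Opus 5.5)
By Proposition~\ref{prop:near-optimal}, $Q[X]$ is a quadrangulation with a unique face $\phi$ that is not a face of $Q$. Let $C=v_1v_2v_3v_4$ be the boundary $4$-cycle of $\phi$, and let $S:=\{v_1,v_2,v_3,v_4\}$. The plan is to show that $S$ is a vertex-cut of $G$. Combined with the result of Fujisawa, Segawa and Suzuki~\cite{MR3846896} that $\kappa(G)\in\{4,6\}$, this gives $\kappa(G)\le 4$, and hence $\kappa(G)=4$.

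First, locate the vertices outside $X$. Every face of $Q[X]$ other than $\phi$ is a face of $Q$, so it contains no vertex of $Q$ in its interior. Since $V(Q)=V(G)$ and $X\ne V(G)$, all vertices of $Y:=V(G)\setminus X\ne\varnothing$ lie in the open region $\phi$.

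Second, find a vertex of $X\setminus S$ on the other side of $C$. Since $|X|\ge 6>4$, there is such a vertex $w$, and it lies in the closed complement of $\phi$. It must lie strictly outside, because it is not on $C$.

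Third, show that no edge of $G$ joins $Y$ to $X\setminus S$. Edges of $Q$ do not cross $C$. A crossing edge of $G$ is a diagonal of some face $\psi$ of $Q$. If $\psi$ lies outside $\phi$, then $\psi$ is a face of $Q[X]$, and both endpoints of the diagonal lie in $X$, on the closed outer side of $C$. If $\psi$ lies inside $\phi$, then its vertices lie in the closure of $\phi$, so they belong to $Y\cup S$. In either case no edge joins $Y$ to $X\setminus S$, so $G-S$ has $Y$ and $w$ in different components, and $S$ is a vertex-cut of size $4$.

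The main point to check carefully is the third step, namely that every crossing edge is the diagonal of a face of $Q$ lying entirely on one side of $C$. This follows from Lemma~\ref{lem:qad}, since each crossing pair lies inside a single quadrilateral face of $Q$, and faces of $Q$ are not split by the cycle $C\subseteq Q$. Condition (3) of Proposition~\ref{prop:near-optimal} is not needed for this argument.
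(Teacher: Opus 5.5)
Your proof is correct and follows essentially the same route as the paper. Like the paper, you take the boundary cycle $C$ of the exceptional face $\phi$ from Proposition~\ref{prop:near-optimal}, place $V(G)\setminus X$ inside $\phi$ and $X\setminus V(C)$ outside it, and use Lemma~\ref{lem:qad} to see that no diagonal crosses $C$, so $V(C)$ is a $4$-vertex-cut. The only cosmetic difference is that you invoke $\kappa(G)\in\{4,6\}$, whereas the paper uses just the $4$-connectedness of optimal $1$-planar graphs, which is all the lower bound needs.
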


\begin{proof}
By Proposition~\ref{prop:near-optimal}, $Q[X]$ is a
quadrangulation with a unique quadrilateral face $\phi$ that is not
a face of $Q$. Let $C$ denote the boundary cycle of $\phi$.

We first show that $C$ separates $X\setminus V(C)$ from
$V(G)\setminus X$ in $Q$. Every face of $Q[X]$ other than $\phi$ is
also a face of $Q$. Therefore, no vertex of $V(Q)\setminus X$ can lie
in the interior of any face of $Q[X]$ other than $\phi$. Since
$X\subsetneq V(G)=V(Q)$, it follows that $|V(G)\setminus X|\ge 1$, and  every vertex of
$V(G)\setminus X$ lies in the region of the plane bounded by $C$
that contains $\phi$.

On the other hand, since $|X|\ge 6$ and $|V(C)|=4$, we have
$
X\setminus V(C)\ne\varnothing.
$
All vertices of $X\setminus V(C)$ lie on the other side of $C$,
because $\phi$ is a face of the plane graph $Q[X]$. Hence, by the
Jordan curve theorem, every path in $Q$ from a vertex of
$X\setminus V(C)$ to a vertex of $V(G)\setminus X$ contains a
vertex of $C$.  It remains to show that the additional edges in
$E(G)\setminus E(Q)$ do not destroy the separation, so that $V(C)$
is also a vertex-cut of $G$. By Lemma~\ref{lem:qad}, every edge of
$E(G)\setminus E(Q)$ is a diagonal drawn inside a quadrilateral face
of $Q$. Since $C$ is a cycle of $Q$, the interior of each face of
$Q$ lies entirely on one side of $C$. Consequently, no edge of
$G-V(C)$ joins a vertex of $X\setminus V(C)$ to a vertex of
$V(G)\setminus X$. Both sets $X\setminus V(C)$ and $V(G)\setminus X$ are nonempty.
Therefore, $G-V(C)$ is disconnected, and hence $V(C)$ is a
vertex-cut of $G$ of size $4$. Thus,
$
\kappa(G)\le 4.
$ 
Since every optimal $1$-planar graph is $4$-connected \cite{MR2746706}, we have
$\kappa(G)\ge 4$. Together with $\kappa(G)\le 4$, this yields
$\kappa(G)=4$.
\end{proof}

\section{Proof of Theorem \ref{thm:six-connected-characterization}}\label{sec:proofTheorem2}
In this section, we prove Theorem \ref{thm:six-connected-characterization}. We begin with the following
lemma, which describes the structure of restricted edge-cuts of size~$10$ of optimal 1-planar graphs.
\begin{lemma}
\label{lem:structure-10-cut}
Let $G$ be an optimal $1$-planar graph with $\xi(G)=10$, and let
$F$ be a minimum restricted edge-cut of $G$. Let $X$ and $Y$ be 
vertex sets of two components of $G-F$. Then exactly one of the
following holds:

\begin{enumerate}
\item[(i)] One of $G[X]$ and $G[Y]$, say $G[X]$, consists of a
single edge that is a $(6,6)$-edge of $G$.

\item [(ii)]  Both $G[X]$ and $G[Y]$ are almost optimal and satisfy
$
    |X|\ge 6
$ and 
$    |Y|\ge 6 .
$

\end{enumerate}
\end{lemma}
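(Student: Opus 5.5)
We follow the edge-counting argument used for Theorem~\ref{thm:main}. By Lemmas~\ref{lem:minimal} and~\ref{lem:twocompnents}, $G-F$ has exactly two components, so $Y=V(G)\setminus X$, $F=[X,Y]$, and $G[X]$, $G[Y]$ are connected. Since $F$ is restricted, $|X|,|Y|\ge 2$.

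\emph{Case $|X|=2$.} Write $X=\{u,v\}$. Then $uv\in E(G)$, and
\[
|F|=d_G(u)+d_G(v)-2=10.
\]
By Lemma~\ref{lem:op_degree}, both degrees are at least $6$, so $d_G(u)=d_G(v)=6$ and $uv$ is a $(6,6)$-edge. This gives (i). The same reasoning applies with $X$ and $Y$ swapped.

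\emph{Case $|X|,|Y|\ge 3$.} Double counting as in Claim~\ref{claim:opxy} gives
\[
|E(G[X])|+|E(G[Y])|=4(|X|+|Y|)-8-10=(4|X|-8)+(4|Y|-8)-2 .
\]
By Lemma~\ref{lem:4n-8}, each summand is at most $4|X|-8$ (respectively $4|Y|-8$). Lemma~\ref{lem:proper} rules out equality for either one, because $G[X]$ and $G[Y]$ are proper subgraphs. So each summand has a deficit of at least $1$, and the total deficit is exactly $2$. Hence both $G[X]$ and $G[Y]$ have exactly $4n-9$ edges, i.e.\ both are almost optimal.

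Observation~\ref{ob:1} then says each of $G[X]$, $G[Y]$ is either a $3$-cycle or has at least $6$ vertices. To exclude the $3$-cycle, suppose $G[X]$ is a triangle on $\{a,b,c\}$. Then
\[
|F|=d_G(a)+d_G(b)+d_G(c)-6\ge 18-6=12>10,
\]
a contradiction. Hence $|X|,|Y|\ge 6$, which is (ii).

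\emph{Exclusivity.} In (i) one side has two vertices, while in (ii) both sides have at least six, so the two alternatives cannot hold simultaneously.

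The step needing the most care is the deficit bookkeeping. The point is that Lemma~\ref{lem:proper} forces a deficit of at least one on each side, and this is exactly what pins both deficits to one; the rest is routine.
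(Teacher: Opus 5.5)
Your proposal is correct and follows essentially the same route as the paper's proof. In both, $F=[X,Y]$ is obtained from minimality, the $|X|=2$ case is settled by a degree count, and when both sides have at least $3$ vertices, edge counting together with Lemma~\ref{lem:proper} shows both sides are almost optimal. The $3$-cycle is then ruled out by degrees, and Observation~\ref{ob:1} gives $|X|,|Y|\ge 6$. Your explicit check that (i) and (ii) are mutually exclusive is a small addition that the paper leaves implicit.
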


\begin{proof}
By Lemma \ref{lem:minimal}, $F$ is a minimal edge-cut. By Lemma
\ref{lem:twocompnents}, $G[X]$ and $G[Y]$ are exactly the  two components of $G-F$. Hence
$
F=[X,Y],
$
and both $G[X]$ and $G[Y]$ are connected. Since $F$ is a restricted
edge-cut, we have
$
|X|\ge 2
$ and 
$
|Y|\ge 2 .
$

We distinguish two cases according to the value of
$\min\{|X|,|Y|\}$; these cases correspond to {\rm (i)} and
{\rm (ii)}, respectively.

\medskip
\noindent\textbf{Case 1.} $\min\{|X|,|Y|\}=2$.

By symmetry, we may assume that $|X|=2$. Write $X=\{u,v\}$.
Since $G[X]$ is connected, $uv\in E(G)$.  Since every optimal $1$-planar graph has
minimum degree $6$, we have
$
d_G(u)\ge 6
$ and 
$
d_G(v)\ge 6 .
$
Therefore
$
10=|F|=d_G(u)+d_G(v)-2\ge 6+6-2=10.
$
Hence 
$
d_G(u)=d_G(v)=6 .
$
Thus $uv$ is a $(6,6)$-edge of $G$. Thus, (i) holds.

\medskip
\noindent\textbf{Case 2.} $
|X|\ge 3
$ and 
$
|Y|\ge 3 .
$

Put
$
 e_X:=|E(G[X])|, e_Y:=|E(G[Y])|.
$  Since $G$ is optimal $1$-planar and $|F|=10$, we have
\begin{equation}\label{eq:en-9}
\begin{aligned}
e_X+e_Y
&= |E(G)|-|F| \\
&= 4(|X|+|Y|)-8-10 \\
&= (4|X|-9)+(4|Y|-9).
\end{aligned}
\end{equation}
On the other hand, $G[X]$ and $G[Y]$ are proper induced subgraphs of $G$.
By Lemma \ref{lem:4n-8} and Lemma \ref{lem:proper}, neither of them is optimal
$1$-planar. Hence
$
e_X\le 4|X|-9
$ and
$
e_Y\le 4|Y|-9 .
$
Combining this with the equality (\ref{eq:en-9}), we obtain
$
e_X=4|X|-9
$ and 
$e_Y=4|Y|-9 .
$
Thus both $G[X]$ and $G[Y]$ are almost optimal. 

It remains to show that $|X|\ge 6$ and $|Y|\ge 6$. We prove this
for $X$; the proof for $Y$ is the same. If $|X|=3$, then
$
e_X=3 .
$
Thus
$$
|F|
=\sum_{v\in X}d_G(v)-2e_X
\ge 3\cdot 6-2\cdot 3
=12,
$$
contradicting $|F|=10$. Hence $|X|\ne 3$. Furthermore, by Observation \ref{ob:1},  we have $|X| \ge 6$. Similarly, $|Y|\ge 6$. Thus (ii) holds. This proves the lemma.
\end{proof}

\begin{proposition}\label{pro:xikappa}
Let $G$ be an optimal 1-planar graph without $(6,6)$-edges. If $\xi(G)=10$, then $\kappa(G)=4$.
\end{proposition}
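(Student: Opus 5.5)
This follows almost directly from Lemma~\ref{lem:structure-10-cut} and Corollary~\ref{cor:four-cut-from-deficient-subgraph}.

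Fix a $1$-planar drawing of $G$, so that we may regard $G$ as an optimal $1$-plane graph and use $Q:=Q(G)$. Since $\xi(G)=10$, choose a minimum restricted edge-cut $F$ with $|F|=10$. Let $X$ and $Y$ be the vertex sets of the two components of $G-F$; there are exactly two by Lemmas~\ref{lem:minimal} and~\ref{lem:twocompnents}.

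Apply Lemma~\ref{lem:structure-10-cut}. Alternative~(i) would give a $(6,6)$-edge of $G$, which the hypothesis excludes. Hence alternative~(ii) holds: $G[X]$ is almost optimal and $|X|\ge 6$.

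Since $|Y|\ge 6>0$, the set $X$ is a proper subset of $V(G)$. Thus $X\subsetneq V(G)$, $|X|\ge 6$, and $G[X]$ is almost optimal, so Corollary~\ref{cor:four-cut-from-deficient-subgraph} applies and yields $\kappa(G)=4$.

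There is essentially no obstacle beyond checking the hypotheses of Corollary~\ref{cor:four-cut-from-deficient-subgraph}. The two points that need care are the following.
\begin{itemize}
\item Properness of $X$ is needed, and it follows from $Y\neq\varnothing$.
\item The corollary is stated for an optimal $1$-plane graph. This is harmless, because passing to a fixed drawing does not change $\kappa(G)$ or the induced subgraphs.
\end{itemize}
All the real work (the counting that forces $Q[X]$ to be a quadrangulation with a single non-$Q$ face whose boundary $4$-cycle separates $X$ from $Y$) was already done in Proposition~\ref{prop:near-optimal} and its corollary.
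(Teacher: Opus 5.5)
Your proof is correct and follows the paper's argument exactly. Lemma~\ref{lem:structure-10-cut} gives an almost optimal $G[X]$ with $|X|\ge 6$, since alternative~(i) is excluded by the absence of $(6,6)$-edges, and Corollary~\ref{cor:four-cut-from-deficient-subgraph} then yields $\kappa(G)=4$; your explicit checks that $X\subsetneq V(G)$ (because $Y\neq\varnothing$) and that fixing a $1$-planar drawing is harmless are points the paper leaves implicit, and both are handled correctly.
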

\begin{proof}
Let $F$ be a minimum restricted edge-cut of $G$. Since $\xi(G)=10$, we
have $|F|=10$. Let $X$ and $Y$ be the vertex sets of the two components
of $G-F$. Since $G$ has no $(6,6)$-edge, by Lemma \ref{lem:structure-10-cut},
$
|X|\ge 6
$ and $|Y|\ge 6,
$
and both $G[X]$ and $G[Y]$ are almost optimal.
By Corollary \ref{cor:four-cut-from-deficient-subgraph},
$
\kappa(G)=4 .
$
This proves the proposition.
\end{proof}

We now prove Theorem~ \ref{thm:six-connected-characterization}.
\begin{proof}[\textbf{Proof of Theorem \ref{thm:six-connected-characterization}}]
Assume that $G$ has a $(6,6)$-edge $uv$. Let
\[
F:=[\{u,v\},V(G)\setminus\{u,v\}]
 =\{e\in E(G): e\text{ is incident with }u\text{ or }v\}
   \setminus\{uv\}.
\]
Since $d_G(u)=d_G(v)=6$, we have
$
|F|=d_G(u)+d_G(v)-2=10.
$
In $G-F$, the edge $uv$ forms a component. Moreover, every vertex
$w\in V(G)\setminus\{u,v\}$ loses at most two incident edges. By
Lemma~\ref{lem:op_degree},
$
d_{G-F}(w)\ge d_G(w)-2\ge 4.
$
Thus, $G-F$ has no isolated vertices. Therefore, $F$ is a restricted
edge-cut of $G$ of size~$10$. Combining this with Theorem~\ref{thm:main}, we obtain 
$\xi(G)=10$.

Conversely, suppose that $\xi(G)=10$. If $G$ has no $(6,6)$-edge,
then Proposition \ref{pro:xikappa} implies
$
        \kappa(G)=4,
$
contradicting the assumption that $\kappa(G)=6$. Hence $G$
contains a \((6,6)\)-edge.
\end{proof}
%
%

\section*{Declarations}
Data availability is not applicable to this article, as no datasets
were generated or analyzed during the current study.
The authors declare that they have no conflict of interest.
The work was supported by the National
Natural Science Foundation of China (Grant Nos. 12271157, 12371346).

\end{document}